\documentclass[preprint,12pt]{elsarticle}

\usepackage{graphicx} % Required for inserting images
\usepackage{enumitem}
\usepackage{amssymb}
\usepackage{amsthm}
\usepackage{amsmath}
\usepackage{thmtools}
\usepackage[margin=1in]{geometry}
\usepackage[english]{babel}

\usepackage{tikz}
\usetikzlibrary{calc, arrows.meta}
\usepackage{caption, subcaption}
\usepackage{enumitem}
\usepackage{titlesec}

\titleformat{\subsection}{\bfseries\itshape}{\thesubsection.}{0.4em}{}

\theoremstyle{plain}

\newtheorem{theorem}{Theorem}[section]
\newtheorem{lemma}[theorem]{Lemma}
\newtheorem{corollary}[theorem]{Corollary}
\newtheorem{conjecture}[theorem]{Conjecture}
\newtheorem*{theorem*}{Theorem} %no numbering
\newtheorem{problem}[theorem]{Problem} %problem
\newtheorem*{problem*}{Problem} %problem

\DeclareMathOperator{\diam}{diam}

\newcommand{\sigmax}{\sigma^{-}_{\max}}
\DeclareMathOperator{\arrows}{Ar}
\DeclareMathOperator{\lines}{Li}

\journal{Discrete Mathematics}

\begin{document}

\begin{frontmatter}
\title{On the Total Regularity of Almost Mixed Moore Graphs}

\author{Ethan Shallcross}
\address{School of Mathematics and Statistics, The Open University, Milton Keynes, UK}
\ead{ershallcros@gmail.com}

\date{\today}

\begin{abstract}
The degree/diameter problem asks for the largest order of a graph with a given diameter and maximum vertex degree \cite{miller-2013}. This has been widely studied and given rise to a recent variation for mixed graphs (graphs with both undirected edges and directed arcs), where an additional bound is placed on the maximum directed out-degree of any vertex \cite{tuite-2019}. Both problems have applications to network design. Counting the possible number of vertices at each distance from a given vertex gives a bound on the order of a mixed graph satisfying the degree and diameter constraints (the \emph{mixed Moore bound}) \cite{buset-2016}. 
In this paper, we settle an open problem from \cite{tuite-2019} concerning the total regularity of mixed graphs whose order is one less than the mixed Moore bound (\emph{almost mixed Moore graphs}). We use this result to show that the three known almost mixed Moore graphs of diameter at least three are the only such mixed graphs.
\end{abstract}

\begin{keyword}
degree/diameter problem, mixed graphs, almost mixed Moore graphs
\end{keyword}

\end{frontmatter}

\section{Introduction}
Consider modelling a computer network using a (simple) graph, with vertices representing computers and edges representing communication channels. Transmissions may be sent between computers which are not directly connected through paths in the network. Due to hardware constraints, there may be a limit on how many connections a given computer may have. For fast transmission speeds, we may wish to limit the maximum distance between computers. Given these two constraints, how large (in terms of the number of computers) can the network be?
More generally, we may ask for the largest order of a graph with a given diameter and maximum vertex degree. This is known as the \emph{(undirected) degree/diameter problem}. The reader should see \cite{miller-2013} for a survey of the extensive literature.

In the preceding example, the communication channels were two-way (both linked computers could transmit over them). However, we may want some of the channels to be one-way (perhaps for security reasons). There may be a separate hardware implementation of one-way channels, giving rise to two different constraints on the number of outward connections a computer may have. As before, how large can the network be under our three constraints?

We may model this revised network using a mixed graph (a graph with both undirected edges and directed edges). In general, we now ask for the largest order of a mixed graph with given diameter and maximum undirected and directed degrees. This is known as the \emph{degree/diameter problem for mixed graphs} and is an active area of research (see \cite{nguyen-2007}, \cite{tuite-2019}, and \cite{tuite-2022}, for example).

Suppose we have a mixed graph $G$ satisfying our constraints. By counting the number of vertices permitted at each distance from a given vertex (up to the diameter), we obtain an upper bound on the order of $G$. This is known as the \emph{mixed Moore bound}. A mixed graph which meets this bound is known as a \emph{mixed Moore graph}.

Clearly, if we could find mixed Moore graphs for all possible constraint values, the problem would be solved. However, it was shown in \cite{nguyen-2007} that there are no such mixed graphs for diameter at least three. Therefore, the question becomes finding mixed graphs as close to the mixed Moore bound as possible.

Mixed graphs with order one less than the bound are called \emph{almost mixed Moore graphs}, and their existence is a natural subject of investigation. There have been recent papers on their properties, such as \cite{tuite-2019} and \cite{tuite-2022}. In particular, the \emph{total regularity} (defined in Section~\ref{sec:background}) of almost mixed Moore graphs was conjectured by Tuite and Erskine in 2019 \cite{tuite-2019}. The same authors made progress on this question, but up to now it remained open in general.

This paper proceeds as follows. The next section introduces the key terminology and notation which will be used throughout. We also formally introduce almost mixed Moore graphs and give several useful results from the literature. Section~\ref{sec:totalregularity} consists of the proof that all almost mixed Moore graphs are totally regular. In Section~\ref{sec:nonexistence}, we use our new result to show that there are no such graphs of diameter at least three other than those already discovered. We conclude with some open problems and potential directions for future study.

\section{Background}
\label{sec:background}
\subsection{Notation}
We begin with definitions and notation similar to that found in \cite{tuite-2022}. Formally, a mixed graph $G$ consists of a (finite) set $V(G)$ of vertices, an edge set $E(G)$ of unordered pairs $\{ x,y \}$ of distinct vertices (usually written $xy$), and an arc set $A(G)$ of ordered pairs $(x,y)$ of distinct vertices. We write $x \sim y$ and $x \rightarrow y$ to mean that $\{ x, y\} \in E(G)$ and $(x, y) \in A(G)$, respectively. We say that an edge $\{ u,v \}$ or arc $(u, v)$ is incident to $x$ if $x=u$ or $x=v$. By convention we allow \emph{at most one} edge or arc between two vertices. The \emph{order} of $G$ is $|V(G)|$ (the number of vertices in $G$). For any $S \subseteq V(G)$, the subgraph of $G$ \emph{induced} by $S$ is the mixed graph with vertex set $S$, edge set $E(G) \cap \{ \{ u, v \} : u, v \in S \}$ and arc set $A(G) \cap \{ (u, v) : u, v \in S \}$.

For any mixed graph $G$ and $v \in V(G)$, we define the \emph{directed in-} and \emph{directed out-neighbourhoods} of $v$ as the sets
\[ Z^-(v) = \{ u \in V(G) : u \rightarrow v \} \text{ and } Z^+(v) = \{ u \in V(G) : v \rightarrow u \}, \]
respectively. For $A \subseteq V(G)$, we write $Z^-(A)$ to mean $\bigcup_{v \in A} Z^-(v)$.
We also define the \emph{undirected neighbourhood} of $v$ as \[ U(v) = \{ u \in V(G) : u \sim v \}. \]Vertices in $Z^-(v)$ are known as directed in-neighbours of $v$ (undirected neighbours and directed out-neighbours are defined similarly). The \emph{in-} and \emph{out-neighbourhoods} of $v$ are then \[ N^-(v) = Z^-(v) \cup U(v) \text{ and } N^+(v) = Z^+(v) \cup U(v). \]

Next, the \emph{directed in-} and \emph{directed out-} and \emph{undirected degrees} of $v$ are $d^-(v) = |Z^-(v)|$, $d^+(v) = |Z^+(v)|$, and $d(v) = |U(v)|$ respectively. Continuing as in \cite{tuite-2022}, we shall say that $G$ is \emph{out-regular} if there exist integers $r$ and $z$ such that for all $v \in V(G)$, $d(v) = r$ and $d^+(v) = z$. In this case, $z$ and $r$ are referred to as the \emph{directed out-} and \emph{undirected degrees of $G$}, respectively. Similarly, $G$ is \emph{in-regular} if there exist integers $z$ and $r$ (the \emph{directed in-} and \emph{undirected degrees of $G$}, respectively) such that for all $v \in V(G)$, $d(v) = r$ and $d^-(v) = z$. $G$ is \emph{totally regular} if it is both out- and in-regular and for all $v \in V(G)$, $d^-(v) = d^+(v)=z$. The integer $z$ is known as the \emph{directed degree of $G$}.

If $G$ is out- or in-regular with undirected degree one, $v^*$ denotes the unique undirected neighbour of $v$. Sometimes $v^*$ refers to an arbitrary undirected neighbour of $v$ (when there is potentially more than one such neighbour), but this shall be clear from context.

A \emph{walk} of length $k$ is a sequence of vertices $v_0, v_1, v_2, \dots, v_k$ such that $v_i \sim v_{i+1}$ or $v_i \rightarrow v_{i+1}$ for $0 \leq i < k$. A \emph{path} is any walk in which all of the vertices are distinct. For any $u, v \in V(G)$, any walk $v=v_0, \dots, v_k=u$ is a $u,v$-walk and we similarly define a $u,v$-path. We define the \emph{distance} between $u$ and $v$, $d_G(u,v)$, to be the length of a shortest $u,v$-path in $G$ if such a path exists and $\infty$ otherwise. Where context is clear, we may simply write $d(u,v)$. We define the \emph{diameter} of $G$ as $\diam(G) = \max\{ d(u, v) : u, v \in V(G) \}$.

\subsection{The degree/diameter problem for mixed graphs}
The degree/diameter problem for mixed graphs asks for the largest order of a mixed graph with a given diameter $k$ such that all vertices in the graph have undirected degree at most $r$ and directed out-degree at most $z$. Suppose $G$ is a mixed graph with diameter $k$ such that each vertex has undirected degree at most $r$ and directed out-degree at most $z$. We first give a well-known upper bound on the order of $G$.

Let $u \in V(G)$. Similarly to as in \cite{tuite-2022} and \cite{buset-2016}, we may construct a \emph{mixed Moore tree} rooted at $u$ of depth $n$, denoted $^GT^n_u$. First, $^GT^0_u$ has no arcs or edges and the single vertex $(u)$ (a tuple consisting of $u$). 
To construct $^GT^{n+1}_u$ from $^GT^n_u$, for each tuple $t = (u_0, u_1, \dots, u_{n})$ of length $n+1$ in $V(^GT^n_u)$, for every $u_{n}^* \in U(u_n)$ except $u_{n-1}$ (if $n>0$), add the tuple $t' = (u_0, u_1, \dots, u_n, u_n^*)$ to the vertex set and $tt'$ to the edge set. Similarly, for each $u_n' \in Z^+(u_n)$, add the vertex $t' = (u_0, u_1, \dots, u_n, u_n')$ and an arc so that $t \rightarrow t'$. We usually identify the tuples with their final entries. If $G$ is clear from the context, we write $^GT^n_u$ as $T^n_u$ and $T_u$ to mean $^GT^{\diam(G)}_u$. 

The set of tuples of length $\ell$ in $T^n_u$ is \emph{layer $\ell - 1$} of $T^n_u$. For example, we may write that layer $0$ consists of $u$, layer $1$ as consists of the out-neighbours of $u$, and so on. For any $v \in N^+(u)$, the subgraph of $T_u^n$ induced by the tuples with first two entries $(u, v)$ is called the \emph{$v$-branch} of the mixed Moore tree. We say a vertex $w$ appears in layer $\ell$ of the $v$-branch to mean that (a tuple with final vertex) $w$ lies in both the $v$-branch and layer $\ell$ of $T^n_u$. A \emph{directed branch} is a branch of $T_u^n$ rooted at a directed out-neighbour of $u$. \emph{Undirected branches} are defined similarly.

Notice that a vertex $v \in V(G)$ may be the final entry in more than one tuple. In this case, we say that $v$ is \emph{repeated} and describe $v$ as appearing multiple times in the mixed Moore tree. A graphical representation will aid this explanation. 

\begin{figure}[h!]
    \centering
    \begin{subfigure}{0.3\textwidth}
        \centering
        \begin{tikzpicture}[x=0.2mm,y=-0.2mm,inner sep=0.5mm,scale=1,thick,vertex/.style={circle,draw,minimum size=15}]
                \node at ($(0,0)$) [vertex] (a) {a};
                \node at ($(80, 0)$) [vertex] (b) {b};
                \node at ($(80, 80)$) [vertex] (c) {c};
                \node at ($(0, 80)$) [vertex] (d) {d};
                \node at ($(40, 40)$) [vertex] (e) {e};
                \node at ($(40, 120)$) [vertex] (f) {f};
            
                \path (a) edge (b);
                \path (a) edge[->] (e);
                \path (b) edge[->] (c);
                \path (c) edge (e);
                \path (c) edge[->] (d);
                \path (c) edge[->] (f);
                \path (d) edge (f);
                \path (d) edge[->] (a);
                \path (d) edge[->] (e);
                \path (e) edge[->] (b);
        \end{tikzpicture}
        \caption{The mixed graph $G_1$.}
    \end{subfigure}
    \begin{subfigure}{0.6\textwidth}
        \centering
        \begin{tikzpicture}[x=0.2mm,y=-0.2mm,inner sep=0.5mm,scale=1,thick,vertex/.style={circle,draw,minimum size=15}]
                \node at ($(0,0)$) [vertex] (a) {a};
                
                \node at ($(-75, 40)$) [vertex] (ae) {e};
                \node at ($(150, 40)$) [vertex] (ab) {b};

                \node at ($(-150, 80)$) [vertex] (aeb) {b};
                \node at ($(0, 80)$) [vertex] (aec) {c};
                \node at ($(150, 80)$) [vertex] (abc) {c};
                
                \node  at ($(-200, 120)$) [vertex] (aebc) {c};
                \node at ($(-100, 120)$) [vertex] (aeba) {a};
                \node at ($(-50, 120)$) [vertex] (aecd) {d};
                \node at ($(50, 120)$) [vertex] (aecf) {f};

                \node at ($(100, 120)$) [vertex] (abcd) {d};
                \node at ($(150, 120)$) [vertex] (abcf) {f};
                \node at ($(200, 120)$) [vertex] (abce) {e};

                \path (a) edge[->] (ae);
                \path (a) edge (ab);
                \path (ae) edge[->] (aeb);
                \path (ae) edge (aec);

                \path (ab) edge[->] (abc);
                \path (aeb) edge[->] (aebc);
                \path (aeb) edge (aeba);

                \path (aec) edge[->] (aecd);
                \path (aec) edge[->] (aecf);

                \path (abc) edge[->] (abcd);
                \path (abc) edge[->] (abcf);
                \path (abc) edge (abce);
        \end{tikzpicture}
        \caption{$^{G_1}T^3_a$ - the mixed Moore tree of depth three rooted at $a$.}
    \end{subfigure}
    \caption{A mixed graph and a mixed Moore tree.}
    \label{fig:mixedmooretreeexample}
\end{figure}

Figure~\ref{fig:mixedmooretreeexample} shows a mixed graph $G_1$ and $T_a^3$. Notice that $\diam(G_1) = 3$, so every vertex appears in the mixed Moore tree. There are unique undirected and directed branches (the $b$- and $e$-branches, respectively). We see that $f$ appears twice: in layer $3$ of both the undirected and directed branches. Notice that since the only occurrences of $f$ are in layer $3$, $d(a, f) = 3$.

In a similar vein, we introduce the notion of an \emph{inverse mixed Moore tree}. The inverse mixed Moore tree of depth $n$ rooted at $u$ is denoted $T_u^{-n}$. The construction is identical to that of mixed Moore trees, only we now consider in-neighbours of vertices as opposed to out-neighbours. We also use similar terminology as before. An example is shown in Figure~\ref{fig:inversemixedmooretreeexample}.

\begin{figure}[h!]
    \centering
    \begin{tikzpicture}[x=0.2mm,y=-0.2mm,inner sep=0.5mm,scale=1,thick,vertex/.style={circle,draw,minimum size=15}]
            \node at ($(0,0)$) [vertex] (a) {a};
            
            \node at ($(-75, 30)$) [vertex] (ad) {d};
            \node at ($(75, 30)$) [vertex] (ab) {b};

            \node at ($(-150, 60)$) [vertex] (adc) {c};
            \node at ($(0, 60)$) [vertex] (adf) {f};
            \node at ($(150, 60)$) [vertex] (abe) {e};
            
            \node  at ($(-200, 100)$) [vertex] (adcb) {b};
            \node at ($(-100, 100)$) [vertex] (adce) {e};
            \node at ($(-50, 100)$) [vertex] (adfc) {c};
            \node at ($(50, 100)$) [vertex] (adfd) {d};

            \node at ($(100, 100)$) [vertex] (abea) {a};
            \node at ($(150, 100)$) [vertex] (abed) {d};
            \node at ($(200, 100)$) [vertex] (abec) {c};

            \path (ad) edge[->] (a);
            \path (ab) edge (a);

            \path (adc) edge[->] (ad);
            \path (adf) edge (ad);
            \path (abe) edge[->] (ab);

            \path (adcb) edge[->] (adc);
            \path (adce) edge (adc);
            \path (adfc) edge[->] (adf);
            \path (adfd) edge (adf);
            \path (abea) edge[->] (abe);
            \path (abed) edge[->] (abe);
            \path (abec) edge (abe);
    \end{tikzpicture}
    \caption{The inverse mixed Moore tree $^{G_1}T^{-3}_a$.}
    \label{fig:inversemixedmooretreeexample}
\end{figure}

Notice that since $\diam(G) = k$, each vertex in $G$ appears in $T_u$. But given our degree constraints, we can find an upper bound on $|V(T_u)|$ (using a second-order recurrence relation, for example). This also gives an upper bound on the order of $G$, known as the \emph{mixed Moore bound}.
This extends the \emph{Moore bound}, which originally applied to undirected graphs \cite{miller-2013}. The new bound was introduced in \cite{buset-2016}, where the following theorem was proven.

\begin{theorem}[Mixed Moore bound {\cite[Theorem 1]{buset-2016}}]
    \label{thm:mixedmoorebound}
    Let $G$ be a mixed graph of diameter $k$ with maximum undirected degree $r$ and maximum directed out-degree $z$. Then, $G$ has order at most \[ M(r,z,k) = A \frac{u_1^{k+1} - 1}{u_1 - 1}  + B\frac{u_2^{k+1} - 1}{u_2 - 1}, \] where: 
    \begin{align*}
        v &= (z+r)^2 + 2(z-r) + 1, \quad u_1 = \frac{z+r-1 - \sqrt{v}}{2} \quad u_2 = \frac{z+r-1+\sqrt{v}}{2},\\ 
        A &= \frac{\sqrt{v} - (z+r+1)}{2\sqrt{v}}, \quad B = \frac{\sqrt{v} + (z+r+1)}{2\sqrt{v}}.
    \end{align*}
\end{theorem}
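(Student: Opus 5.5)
We bound the number of tuples in the mixed Moore tree $T_u$ rooted at an arbitrary vertex $u$. Since $\diam(G)=k$, every vertex of $G$ is the final entry of at least one tuple in $T_u$, so $|V(G)| \le |V(T_u)|$. It therefore suffices to show that $|V(T_u)| \le M(r,z,k)$.

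The plan is to count layer by layer, distinguishing how each tuple was reached. For $\ell \ge 1$, let $a_\ell$ be the number of tuples in layer $\ell$ whose last step is an edge, and $b_\ell$ the number whose last step is an arc. The root contributes $1$ at layer $0$; at layer $1$ we get $a_1 \le r$ and $b_1 \le z$.

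A tuple reached by an edge may not return along that edge. It therefore produces at most $r-1$ edge-children and at most $z$ arc-children. A tuple reached by an arc (or the root) produces at most $r$ edge-children and at most $z$ arc-children. This gives
\[ a_{\ell+1} \le (r-1)a_\ell + r b_\ell, \qquad b_{\ell+1} \le z(a_\ell+b_\ell). \]
Because all coefficients are nonnegative, induction shows that $a_\ell, b_\ell$ are bounded by the solution of the corresponding equalities. Writing $n_\ell = a_\ell+b_\ell$ and eliminating, the extremal sequence satisfies
\[ n_{\ell+1} = (z+r-1)n_\ell + z\, n_{\ell-1}. \]
To see this, note that $n_{\ell+1} = (r+z-1)a_\ell + (r+z)b_\ell = (r+z-1)n_\ell + b_\ell$, and $b_\ell = z n_{\ell-1}$. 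The initial conditions are $n_0=1$ and $n_1=r+z$.

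The characteristic equation is $x^2-(z+r-1)x-z=0$. Its discriminant is $(z+r-1)^2+4z = (z+r)^2+2(z-r)+1 = v$, so its roots are exactly $u_1,u_2$. Hence $n_\ell = A u_1^\ell + B u_2^\ell$, and solving with $n_0=1$, $n_1=z+r$ gives the stated $A,B$. As a check, $A+B=1$ and $Au_1+Bu_2 = \tfrac{z+r-1}{2} + \tfrac{\sqrt v}{2}\cdot\tfrac{z+r+1}{\sqrt v} = z+r$. Summing the geometric series over $\ell=0,\dots,k$ then gives $M(r,z,k)$.

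The only delicate step is the monotonicity argument justifying that the inequalities propagate, i.e.\ that the vector recurrence has nonnegative coefficients; this is immediate once $r \ge 1$ (if $r=0$, use $\max(r-1,0)$, which does not affect the bound). The degenerate cases $u_1=1$ or $u_2=1$, where a denominator vanishes, need separate treatment. Since $u_1<0<u_2$, the only case is $u_2=1$, which forces $z=0$ and $r=1$, and there the formula should be read as the limit.
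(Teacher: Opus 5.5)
Your argument is correct and follows the route the paper indicates; the paper does not reprove the bound but quotes it and remarks that $|V(T_u)|$ can be bounded by a second-order recurrence. The edge/arc split of each layer, the elimination to $n_{\ell+1}=(z+r-1)n_\ell+zn_{\ell-1}$ with $n_0=1$, $n_1=r+z$, and your checks of $u_1$, $u_2$, $A$, $B$ all hold.

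One correction to your closing remark on degenerate cases:
\begin{itemize}
\item $u_2=1$ holds exactly when $r+2z=2$, that is, $(r,z)\in\{(2,0),(0,1)\}$.
\item $(r,z)=(1,0)$ is a different degeneracy: there $v=0$, the root is double ($u_1=u_2=0$), and $A$, $B$ are undefined.
\item $u_1<0<u_2$ requires $z\geq 1$.
\end{itemize}
None of this affects the paper's setting $r,z\geq 1$, where $v>0$ and $u_1<0<1<u_2$.
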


In particular, we have
\begin{equation}
    \label{eqn:mixedmoorek=2}
    M(r,z,2) = 1 + z + (r+z)^2.
\end{equation}

Mixed graphs which meet the mixed Moore bound with $r,z \geq 1$ are known as \emph{mixed Moore graphs}. Bos\'ak proved in \cite{bosak-1979} that there are infinitely many such graphs of diameter $k=2$ (in contrast with the undirected case \cite{hoffman-1960}) and gave necessary conditions for their existence. It is also proven there that all mixed Moore graphs of diameter two are totally regular.

Despite their abundance for $k=2$, Nguyen, Miller, and Gimbert proved in \cite{nguyen-2007} that there are no mixed Moore graphs for $k \geq 3$. In the same paper, the authors also completed the proof that all known mixed Moore graphs are unique (up to isomorphism) for each $(r,z)$ pair. So in the context of the degree/diameter problem, we next ask how close to the mixed Moore bound we may come.

If $G$ is a mixed graph with diameter $k$, maximum undirected degree $r$, and maximum directed out-degree degree $z$, the \emph{defect} of $G$ is defined as $\delta = M(r,z,k) - |V(G)|$. We say that $G$ is an $(r,z,k;-\delta)$-graph. This notation is as found in \cite{tuite-2019}, for example. The following result was proven in \cite{dalfo-2018}.

\begin{theorem}[See {\cite[Theorem 2.1]{dalfo-2018}}]
    \label{thm:undirectedbound}
    Any totally regular $(r,z,k;-\delta)$-graph with $k \geq 3$, has defect $\delta \geq r$.
\end{theorem}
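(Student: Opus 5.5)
We prove that a totally regular $(r,z,k;-\delta)$-graph $G$ with $k \geq 3$ has $\delta \geq r$ by counting repeated vertices in a mixed Moore tree. Fix a vertex $u$ and consider $T_u = T^k_u$. Since $G$ is totally regular, every vertex has exactly $r$ undirected and $z$ directed out-neighbours, so $T_u$ has exactly $M(r,z,k)$ tuples. Since $\diam(G)=k$, every vertex of $G$ occurs in $T_u$. Hence the defect $\delta$ equals the number of \emph{excess} tuples, namely $\sum_{v}(\mathrm{mult}(v)-1)$. The goal is therefore to exhibit at least $r$ repetitions in $T_u$.

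The first step is to locate a repetition in each undirected branch. For each undirected neighbour $v \in U(u)$, consider the $z$ directed out-neighbours $w \in Z^+(u)$. Because $d^-(x)=z$ for every $x$ (total regularity), I would count in-arcs to force a short cycle. Alternatively, I would use the standard trick of taking a vertex at distance $k$ from $u$, or rather the vertex $u$ itself. Since $u$ has in-neighbours, $u$ must reappear in $T_u$ within depth $k$ through each branch whose root $v$ satisfies $d(v,u)\le k-1$. For $v \in U(u)$ this is automatic, because the walk $u,v,u$ is excluded in the tree (no immediate backtracking) yet $u$ is reachable from $v$. The cleaner approach is the following. For each $v\in U(u)$, the vertex $u$ is an out-neighbour of $v$ via the edge $vu$, but the tree forbids this step. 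Since $G$ is totally regular, I would instead pick $w\in Z^-(u)$: in the $v$-branch, consider a shortest path from $v$ to $w$, of length at most $k$. Then $u$ appears (after $w\to u$), or an earlier repetition occurs.

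Concretely, I would argue that for each $v\in U(u)$, the $v$-branch of $T^k_u$ contains a tuple whose final vertex also appears elsewhere in $T_u$ (possibly $u$ itself in layer $\ge 2$). If these excess occurrences can be chosen in distinct branches, this gives $r$ distinct excess tuples and hence $\delta\ge r$. To guarantee this, I would use the counting identity in reverse: if the $v$-branch had no repeats, then the $v$-branch together with the vertex $u$ would be a tree in which every vertex is distinct. However, $v$ itself has $r-1+z$ tree children at layer 2. The vertex $u$ must be reached from $v$ by a path of length at most $k$ avoiding the edge $vu$; otherwise $u$ is never revisited. Moreover, $u$ does have in-neighbours other than via $v$ (since $z\ge1$, $d^-(u)=z\ge1$). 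So some walk $v\to\cdots\to x\to u$ of length at most $k$ exists within the $v$-branch, giving an occurrence of $u$ in the $v$-branch at depth at most $k$. Hence each undirected branch contains a copy of the root $u$, which is an excess tuple.

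The main obstacle is the case where the shortest $v$-to-$u$ path avoiding $vu$ has length greater than $k-1$. This would occur if every short path from $v$ to $u$ used the edge $vu$. In that case I would pick $x\in Z^-(u)$ and use $d(v,x)\le k$ together with $d(x,u)=1$. This may only give $u$ at depth $k+1$, so the argument requires more care. I would then use the inverse tree $T^{-k}_u$, which by total regularity also has exactly $M(r,z,k)$ tuples, and count repeats there symmetrically. If that also fails, I would fall back on exhibiting, in each undirected branch, some vertex at layer at most $k$ that also appears in a directed branch. This would use $k\ge3$: a vertex $y \in Z^+(v)$ must be reached from the directed neighbours of $u$ within $k$ steps along many routes.
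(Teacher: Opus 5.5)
The paper does not prove this statement itself; it quotes it from \cite{dalfo-2018}. Judged on its own, your proposal has a genuine gap. Its central claim, that every undirected branch of $T_u$ contains a second copy of the root $u$, is false.

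Take the second $(1,1,3;-1)$-graph of Figure~\ref{fig:almostmixedmooregraphs}: arcs $i\to i+1$ on the outer cycle, arcs $5\to 9\to 8\to 7\to 6\to 5$ inside, and spokes $i\sim i+5$. Its tree $T_0$ has layers $\{0\}$, $\{5,1\}$, $\{9,6,2\}$, $\{4,8,5,7,3\}$. The unique repeat is $5=0^*$, the $5$-branch is $5,9,4,8$, and $0$ never reappears. Its in-neighbour $5$ occurs only at depth $1$, where stepping back to $0$ is the excluded backtrack, and at depth $3$; its in-neighbour $4$ occurs only at depth $3$.

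The failing step is ``some walk $v\to\cdots\to x\to u$ of length at most $k$ exists within the $v$-branch''. A tuple of the $v$-branch at depth at most $k$ is at most $k-1$ steps from $v$. So you would need an in-neighbour of $u$ within $k-2$ non-backtracking steps of $v$, whereas the diameter only gives $d(v,x)\le k$. The ``main obstacle'' you flag is therefore the typical situation, not a corner case.

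A sanity check also exposes the problem: your main line never uses $k\ge 3$, yet for $k=2$ the conclusion fails, since Bos\'ak's mixed Moore graphs are totally regular with $\delta=0$. The fallbacks do not close the gap. The inverse tree $T^{-k}_u$ poses exactly the same problem. The claim that ``some vertex of each undirected branch also appears in a directed branch'' is only asserted.

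The missing idea is where repeats are actually forced, namely at arrow vertices (compare Lemma~\ref{lem:undirectedindeg}). Take $v\in U(u)$ and $a\in Z^+(v)$. The tuple $a$ lies at depth $2$ of the $v$-branch, and its $r$ undirected neighbours lie at depth $3\le k$ just below it; this is where $k\ge 3$ is used. Meanwhile every directed branch rooted at $w\in Z^+(u)$ must contain an in-neighbour of $a$ unless $a=w$, because $d(w,a)\le k$.

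Call $v$ \emph{good} if some vertex of $\{v\}\cup\bigcup_{a\in Z^+(v)}U(a)$ equals $u$ or occurs in a directed branch. Choose primary occurrences at the root or in directed branches; then each good $v$ contributes an excess tuple inside its own branch.

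If $v$ is bad, each $a\in Z^+(v)$ must be covered in all $z$ directed branches by vertices of $C_a=Z^-(a)\setminus\{\text{bad } v_i\}$. This set has size $z-c_a$, where $c_a$ is the number of bad $v_i$ with $v_i\to a$ (this uses $d^-(a)=z$). Let $d_x$ be the number of occurrences of $x$ in directed branches, plus one if $x=u$. Let $t_x$ be the number of $a$ with $x\in C_a$, and let $B$ be the set of bad branches. Summing over $S=\bigcup_{v\text{ bad}}Z^+(v)$ gives $\sum_x(d_x-1)t_x\ge z|S|-\sum_a|C_a|=z|B|$. Since $t_x\le d^+(x)=z$, this yields $\sum_x(d_x-1)^+\ge|B|$ further excess tuples inside the directed branches. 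Hence $\delta\ge(r-|B|)+|B|=r$.

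Some global count of this sort is needed. Pigeonhole at a single arrow vertex only gives $\delta\ge 1$, and the repeats forced by different branches may coincide.
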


Improving on this bound in the case of $r=z=1$ and $k>3$, the following result was then proven in \cite{tuite-2022}.

\begin{theorem}[See {\cite[Theorem 19]{tuite-2022}}]
    \label{thm:r=z=1defectbound}
    Any totally regular $(1,1,k; -\delta)$-graph with $k \geq 3$ has defect \[
        \delta \geq \sum_{t=1}^{k-2} Z_t' \left\lceil \frac{1}{3} + \frac{1}{3} \left\lfloor \frac{k-t}{2} \right\rfloor \right\rceil,
    \] where $Z'_1 = 1$ and for $2 \leq t \leq k-2$ \[
        Z'_t = \frac{1}{2^{t-1}\sqrt{5}}\left(\left(1+\sqrt{5}\right)^{t-1} - \left(1-\sqrt{5}\right)^{t-1}\right).
    \]
\end{theorem}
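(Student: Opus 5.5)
The bound should come from the standard counting principle: since $\diam(G)=k$, every vertex of $G$ appears in $T_u$. Hence $\delta$ is at least the number of \emph{redundant} occurrences in $T_u$, meaning tuples whose final vertex already occurs at an earlier or equal layer, or whose tuple is not counted in $M(1,1,k)$. So I would fix a vertex $u$ and exhibit, for each $1\le t\le k-2$, a family of $Z'_t$ pairwise disjoint subtrees of $T_u$, each rooted at layer $t$. The plan is then to show that each such subtree of depth $k-t$ contains at least $\left\lceil \frac13+\frac13\lfloor\frac{k-t}{2}\rfloor\right\rceil$ forced repetitions, and to sum these contributions.

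\textbf{Identifying the roots.} First I would observe that $Z'_t$ is the Fibonacci number $F_{t-1}$ (with $Z'_1=1$). For $r=z=1$, the vertices of layer $t$ of $T_u$ split into those reached by an arc and those reached by an edge. An edge-vertex has one child (an arc), and an arc-vertex has two children (an edge and an arc). This gives Fibonacci recurrences for these counts, so I expect $Z'_t$ to count the layer-$t$ vertices of one specified type inside one fixed branch, say the arc-ended vertices of the directed branch rooted at $u$'s out-neighbour. These roots give pairwise disjoint subtrees, which is what makes the repetition counts additive.

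\textbf{Forcing repetitions.} Total regularity is essential here, because every vertex then has exactly one directed in-neighbour and one undirected neighbour. I would take a root $w$ at layer $t$ reached by an arc $x\to w$, and look at the subtree $T^{k-t}_w$ in $T_u$. Using the mixed triangles and quadrangles that total regularity forces, I would show that within each consecutive pair of layers below $w$ (hence the $\lfloor (k-t)/2\rfloor$), the alternating edge--arc walks starting at $w$ must revisit a vertex already reached elsewhere in $T_u$. The key point is this: $u$ has a unique directed in-neighbour $y$ and undirected neighbour $u^*$. The vertex $y$, and likewise the in-neighbours of vertices along the tree, must appear within distance $k$, and the resulting closed walks force coincidences. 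A careful case analysis should show that any three consecutive such pairs of layers yield at least one repetition not accounted for elsewhere, and that at least one repetition always occurs. Together these give the $\lceil \frac13 + \frac13\lfloor\cdot\rfloor\rceil$ form.

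\textbf{Main obstacle.} The hardest step is the disjointness bookkeeping. I need to guarantee that repetitions charged to one root $w$ are not also charged to another root $w'$ or to a different $t$. They also must not coincide with the unavoidable repetition patterns that exist when $k=3$; the theorem excludes $t=k-1,k$ for exactly this reason. I would first try to assign each repetition to the \emph{lowest} layer at which the repeated vertex reappears, together with the branch containing that occurrence, and prove by induction on $k-t$ that this assignment is injective. If the injection fails in some configuration, the fallback is a weighted counting argument: sum over all roots $u\in V(G)$ and use total regularity to average out the shared repetitions. That averaging is what the fractional $\frac13$ suggests.
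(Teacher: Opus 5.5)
The paper does not prove this statement. It quotes it from Tuite and Erskine \cite{tuite-2022} and uses it as a black box, so there is no in-paper proof to compare against. Judged on its own, your proposal has a genuine gap: the step that actually produces repetitions is never supplied.

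Your reduction $\delta=|R(u)|$ is fine, since total regularity gives out-regularity. You also rightly note that every vertex $x$ has exactly two in-neighbours, $x^-$ and $x^*$. But you then invoke `mixed triangles and quadrangles that total regularity forces' and the location of the root's in-neighbour $y$. Neither of these forces any coincidence.

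The mechanism that does force one is the arrow-vertex argument of Lemma~\ref{lem:undirectedindeg}. Suppose $x$ is arc-reached in layer $2\le s\le k-1$ of the $u^*$-branch. Both in-neighbours of $x$ already occur in that branch: its parent, and its edge-child in layer $s+1\le k$. Meanwhile $d(u^+,x)\le k$ forces an in-neighbour of $x$ to occur in the directed branch as well. Hence $\{x^-,x^*\}\cap R(u)\neq\emptyset$.

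Your guess for $Z'_t$ is also off by one. The arc-ended vertices in layer $t$ of the directed branch number $F_t$. By contrast, $Z'_t=F_{t-1}$ (for $t\ge2$) is the number of arc-ended vertices in layer $t$ of the \emph{undirected} branch.

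With this in hand the formula has a transparent meaning:
\begin{itemize}
\item Each arc-reached $p$ in layer $t\ge2$ of the undirected branch (and $p=u^*$ when $t=1$) heads the alternating chain $p^+,p^{+*},p^{+*+},\dots$.
\item This chain contains exactly $m=\lfloor (k-t)/2\rfloor$ arrow tuples, in layers $t+1,t+3,\dots\le k-1$.
\item There are $Z'_t$ such chains for each $1\le t\le k-2$, and they partition the arrow tuples of $T_u$; indeed $\sum_{t=1}^{k-2}Z'_t\lfloor (k-t)/2\rfloor=F_k-1$.
\item This, not any special behaviour at $k=3$, is why $t$ stops at $k-2$.
\end{itemize}
Consecutive arrow tuples $x_i,x_{i+1}$ of a chain share the in-neighbour $x_i^*$. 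So the requirements form a path $p,x_0^*,\dots,x_{m-1}^*$ on $m+1$ vertices, and the theorem charges exactly $\lceil (m+1)/3\rceil$ repeats to each chain.

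A proof therefore needs two things. First, a covering count along this path. Second, a proof that elements of $R(u)$, counted with multiplicity, are not charged to two chains, including when distinct tuples of $T_u$ have the same final vertex.

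Your proposal supplies neither.
\begin{itemize}
\item You assert the per-chain count (`any three consecutive pairs yield a repetition') without argument.
\item Even granting that assertion, it yields only $\lfloor m/3\rfloor$ rather than $\lfloor m/3\rfloor+1$, unless the extra `always occurring' repetition is shown to be distinct from the others.
\item The disjointness across chains, which you rightly flag as the main obstacle, is left open.
\item Averaging over roots cannot substitute for this disjointness. The identity $\delta=|R(u)|$ already holds for every single $u$, and an average would not produce the integer ceilings inside the sum.
\end{itemize}
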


\begin{figure}[h!]
    \centering
    \begin{subfigure}{0.24\textwidth}
        \centering
        \begin{tikzpicture}[x=0.2mm,y=-0.2mm,inner sep=0.5mm,scale=1,thick,vertex/.style={circle,draw,minimum size=10}]
            \foreach \c [count=\i, evaluate=\i as \j using {int(\i-1)}] in {0,...,4}{
                \node at ($(0,0)!8!\j*360/5+90:(9,0)$) [vertex] (\c) {};
            }
            \foreach \c [count=\i, evaluate=\i as \j using {int(\i-1)}] in {5,...,9}{
                \node at ($(0,0)!4!\j*360/5+90:(9,0)$) [vertex] (\c) {};
            }
            \path (0) edge[->] (9);
            \path (1) edge[->] (5);
            \path (2) edge[->] (6);
            \path (3) edge[->] (7);
            \path (4) edge[->] (8);

            \path (5) edge[->] (4);
            \path (6) edge[->] (0);
            \path (7) edge[->] (1);
            \path (8) edge[->] (2);
            \path (9) edge[->] (3);
            
            \path (0) edge (1);
            \path (1) edge (2);
            \path (2) edge (3);
            \path (3) edge (4);
            \path (4) edge (0);
            
            \path (5) edge (8);
            \path (5) edge (7);
            \path (6) edge (9);
            \path (6) edge (8);

            \path (7) edge (9);

        \end{tikzpicture}
        \caption{See \cite{buset-2017}}
        \label{fig:diamtwommg}
    \end{subfigure}
    \begin{subfigure}{0.24\textwidth}
        \centering
        \begin{tikzpicture}[x=0.2mm,y=-0.2mm,inner sep=0.5mm,scale=1,thick,vertex/.style={circle,draw,minimum size=10}]
	        \foreach \c [count=\i, evaluate=\i as \j using {int(\i-1)}] in {0,...,4}{
		        \node at ($(0,0)!8!\j*360/5+90:(9,0)$) [vertex] (\c) {};
	        }
            \foreach \c [count=\i, evaluate=\i as \j using {int(\i-1)}] in {5,...,9}{
		        \node at ($(0,0)!4!\j*360/5+90:(9,0)$) [vertex] (\c) {};
	        }
            \path (0) edge (5);
            \path (1) edge (6);
            \path (2) edge (7);
            \path (3) edge (8);
            \path (4) edge (9);
            
            \path (0) edge[->] (1);
            \path (1) edge[->] (2);
            \path (2) edge[->] (3);
            \path (3) edge[->] (4);
            \path (4) edge[->] (0);
            
            \path (5) edge[->] (9);
            \path (9) edge[->] (8);
            \path (8) edge[->] (7);
            \path (7) edge[->] (6);
            \path (6) edge[->] (5);

        \end{tikzpicture}
        \caption{See \cite{dalfo-2018}}
    \end{subfigure}
    \begin{subfigure}{0.22\textwidth}
        \centering
        \begin{tikzpicture}[x=0.2mm,y=-0.2mm,inner sep=0.5mm,scale=1,thick,vertex/.style={circle,draw,minimum size=10}]
	        \foreach \c [count=\i, evaluate=\i as \j using {int(\i-1)}] in {0,...,4}{
		        \node at ($(0,0)!8!\j*360/5+90:(9,0)$) [vertex] (\c) {};
	        }
            \foreach \c [count=\i, evaluate=\i as \j using {int(\i-1)}] in {5,...,9}{
		        \node at ($(0,0)!4!\j*360/5+90:(9,0)$) [vertex] (\c) {};
	        }
            \path (0) edge (5);
            \path (1) edge (6);
            \path (2) edge[->] (7);
            \path (4) edge (9);
            \path (8) edge[->] (3);
            
            \path (0) edge[->] (1);
            \path (1) edge[->] (2);
            \path (2) edge (3);
            \path (3) edge[->] (4);
            \path (4) edge[->] (0);
            
            \path (5) edge[->] (9);
            \path (9) edge[->] (8);
            \path (8) edge (7);
            \path (7) edge[->] (6);
            \path (6) edge[->] (5);
        \end{tikzpicture}
        \caption{See \cite{dalfo-2018}}
    \end{subfigure}
    \begin{subfigure}{0.22\textwidth}
        \centering
        \begin{tikzpicture}[x=0.2mm,y=-0.2mm,inner sep=0.5mm,scale=1,thick,vertex/.style={circle,draw,minimum size=10}]
	        \foreach \c [count=\i, evaluate=\i as \j using {int(\i-1)}] in {0,...,4}{
		        \node at ($(0,0)!8!\j*360/5+90:(9,0)$) [vertex] (\c) {};
	        }
            \foreach \c [count=\i, evaluate=\i as \j using {int(\i-1)}] in {5,...,9}{
		        \node at ($(0,0)!4!\j*360/5+90:(9,0)$) [vertex] (\c) {};
	        }
            \path (5) edge[->] (0);
            \path (1) edge (6);
            \path (2) edge[->] (7);
            \path (8) edge[->] (3);
            \path (4) edge[->] (9);
            
            \path (0) edge[->] (1);
            \path (1) edge[->] (2);
            \path (2) edge (3);
            \path (3) edge[->] (4);
            \path (4) edge (0);
            
            \path (5) edge (9);
            \path (9) edge[->] (8);
            \path (8) edge (7);
            \path (7) edge[->] (6);
            \path (6) edge[->] (5);

        \end{tikzpicture}
        \caption{See \cite{dalfo-2018}}
    \end{subfigure}
    \caption{The only known almost mixed Moore graphs.}
    \label{fig:almostmixedmooregraphs}
\end{figure}

Since we know that there are no mixed Moore graphs of diameter at least three, it is natural to investigate the existence of mixed graphs with defect one. When $r,z \geq 1$, these are known as \emph{almost mixed Moore graphs}. Up to now, only four such graphs have been discovered. These are shown in Figure~\ref{fig:almostmixedmooregraphs}. The mixed graph in Figure~\ref{fig:diamtwommg} is a $(2,1,2;-1)$-graph. This was proven to be the unique such mixed graph in \cite{buset-2017}. The other three mixed graphs are $(1,1,3;-1)$-graphs. They were proven to be the only such mixed graphs in \cite{dalfo-2018}.

\newpage
\begin{lemma}[{See \cite[Lemma 2.1]{tuite-2019}}]
    Any $(r,z,k;-\delta)$-graph with $r, z \geq 1$, $k \geq 2$, and $\delta<r+z$ is out-regular with undirected degree $r$ and directed out-degree $z$.
    \label{lem:outreg}
\end{lemma}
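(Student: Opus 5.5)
Suppose some vertex $u$ has $d(u) + d^+(u) < r+z$ or has the wrong split ($d(u)<r$ with $d^+(u)$ possibly larger is impossible since the maxima are $r$ and $z$). Since $d(u)\le r$ and $d^+(u)\le z$ for every vertex, out-regularity with undirected degree $r$ and directed out-degree $z$ is equivalent to $d(u)=r$ and $d^+(u)=z$ for all $u$. So we argue by contradiction: assume some vertex $u$ is deficient, i.e. $d(u)\le r-1$ or $d^+(u)\le z-1$. Our goal is to show that $|V(T_u)|$ then drops below $M(r,z,k)-(r+z)+1$. Because every vertex of $G$ appears in $T_u$ (as $\diam(G)=k$), this gives $|V(G)|\le M(r,z,k)-(r+z)$, i.e. $\delta\ge r+z$, contradicting $\delta<r+z$.

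The key step is to quantify how much a single missing neighbour of the root costs in the mixed Moore tree. The mixed Moore bound is obtained by counting, layer by layer, tuples ending with an undirected step (at most $r-1$ undirected children, plus $z$ directed) and tuples ending with a directed step (at most $r$ undirected children and $z$ directed). If $u$ lacks an undirected neighbour, the entire would-be branch rooted at that neighbour is missing. That branch has depth $k-1$ and contains at least $1 + (r-1) + z \ge r+z$ vertices when $k\ge 2$: the root of the branch together with its own layer-two children in the full Moore tree. Similarly, if $u$ lacks a directed out-neighbour, the missing directed branch contains at least $1+r+z>r+z$ tuples when $k\ge2$. All other branches are bounded above by their Moore-tree maxima, since degrees are bounded everywhere. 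Hence $|V(T_u)|\le M(r,z,k)-(r+z)$. Note this holds even when $r=1$: the missing undirected branch has $1+z+\dots\ge 1+z=r+z$ tuples at depth $\le 1$ of the branch.

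Combining the two steps gives $|V(G)|\le |V(T_u)|\le M(r,z,k)-(r+z)$, so $\delta\ge r+z$, which is the desired contradiction. Therefore every vertex has $d(u)=r$ and $d^+(u)=z$.

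The only delicate point I expect is bookkeeping the Moore-tree count precisely. In particular, I need to check that the full mixed Moore tree really is an upper bound layer by layer when degrees are only bounded above. This holds because each tuple has at most as many children as in the extremal tree, so by induction each subtree is dominated. I also need to confirm that a branch of depth $k-1\ge1$ indeed contains the claimed $r+z$ tuples in the extremal count, which is exactly where the hypothesis $k\ge 2$ is used.
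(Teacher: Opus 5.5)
Your proof is correct and follows essentially the same route as the paper. A deficient root loses an entire branch of $T_u$, and layers $1$ and $2$ of that branch alone account for at least $1+(r-1)+z=r+z$ tuples, so $|V(G)|\le|V(T_u)|\le M(r,z,k)-(r+z)$, contradicting $\delta<r+z$. The paper states the same count layer by layer: one vertex is lost from $T^1_u$ and $(r-1)+z$ more from $T^2_u$.
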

\begin{proof}
    Let $G$ be any $(r,z,k;-\delta)$-graph with $\delta<r+z$ and suppose for a contradiction that $G$ is not out-regular. Then, we may choose a vertex $v \in V(G)$ with $|N^+(v)| < r+z$. 
    Hence, $T^1_u$ and $T^2_u$ include at most $M(r,z,1) - 1$ and $M(r,z,2) - 1 - (r-1) - z$ vertices, respectively. Thus, $|V(T_u)| \leq M(r,z,k) - (r+z)$.
    But all vertices of $G$ appear in $T_u$, contradicting that $\delta < r+z$.
\end{proof}

Let $G$ be any $(r,z,k;-1)$-graph with $r, z \geq 1$ and $k \geq 2$. For any $u \in V(G)$, each vertex of $G$ appears in $T_u$. By Lemma~\ref{lem:outreg}, $G$ is out-regular, so $|V(T_u)| = M(r, z, k)$. Therefore, as $G$ has order $M(r, z, k) - 1$, there must be a unique vertex in $G$ which appears twice in $T_u$. We shall call this vertex the \emph{repeat} of $u$ and denote it by $r(u)$. If $r(u) = u$, we say that $u$ is \emph{self-repeat}. The function mapping vertices in $G$ to their repeats is the \emph{repeat function}. 

\begin{lemma}
    \label{lem:repperm}
    The repeat function of any totally regular almost mixed Moore graph is a bijection.
\end{lemma}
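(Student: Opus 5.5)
The plan is to double count occurrences of vertices across all mixed Moore trees $T_u$, using the inverse mixed Moore trees $T^{-k}_v$ as the second way of counting. Let $G$ be a totally regular $(r,z,k;-1)$-graph of order $n = M(r,z,k)-1$. For $u,v \in V(G)$, let $m(u,v)$ denote the number of tuples in $T_u$ whose final entry is $v$. By the discussion following Lemma~\ref{lem:outreg}, every vertex appears in $T_u$, and exactly one vertex, $r(u)$, appears twice. Hence $m(u,v) = 1 + [v = r(u)]$, and the matrix $(m(u,v))$ equals $J + R$, where $R$ is the $0/1$ matrix of the repeat function.

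The key step is to show that for each fixed $v$, $\sum_u m(u,v) = M(r,z,k)$. A tuple $(u_0,\dots,u_\ell)$ in $T_u$ is exactly a walk $u=u_0,\dots,u_\ell$ of length $\ell \le k$ in which each step is an edge or an arc, subject to one restriction: an undirected step $u_{i}\sim u_{i+1}$ is never immediately followed by the reverse step $u_{i+1}\sim u_i$ along the same edge. The tuples of the inverse tree $T^{-k}_v$ are likewise the walks ending at $v$, read backwards, of length at most $k$ with the same restriction. Reversing the sequence gives a bijection between
\[ \bigcup_{u} \{\text{tuples of } T_u \text{ ending at } v\} \quad\text{and}\quad \{\text{tuples of } T^{-k}_v\}. \]
The point to check is that the forbidden pattern is preserved. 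If the reversed sequence contained an undirected backtrack $w\sim w'$ followed by $w'\sim w$, then the original sequence would too, since only edges can be backtracked and edges are symmetric. This check, together with making sure the construction rule "except $u_{n-1}$" is applied only to undirected neighbours, is the step I expect to require the most care.

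By total regularity, every vertex has exactly $r$ undirected in-neighbours and $z$ directed in-neighbours. The inverse tree therefore grows by the same second-order recurrence as the forward tree, so $|V(T^{-k}_v)| = M(r,z,k)$. Consequently each column of $J+R$ sums to $M(r,z,k) = n+1$. Since each column of $J$ sums to $n$, each column of $R$ sums to exactly $1$. That is, every vertex $v$ is the repeat of exactly one vertex $u$. Thus the repeat function is surjective on the finite set $V(G)$, and hence a bijection.
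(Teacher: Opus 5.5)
Your proof is correct and is essentially the paper's argument. Both rest on $|V(T^{-k}_v)| = M(r,z,k)$ under total regularity and on the reversal correspondence between tuples of $T_u$ ending at $v$ and tuples of $T^{-k}_v$ ending at $u$; the paper applies this pointwise (using the diameter to see that exactly one vertex is repeated in $T^{-k}_u$, namely the unique $v$ with $r(v)=u$), whereas you sum it over $u$ as a column count and make explicit the no-backtracking check that the paper leaves implicit.
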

\begin{proof}
    Let $G$ be a totally regular $(r,z,k;-1)$-graph and $u \in V(G)$. As in the case of (non-inverse) mixed Moore trees, $|V(T^{-k}_u)| = M(r, z, k)$. Since $\diam(G) = k$, each vertex must appear in $T^{-k}_u$. But $G$ has order $M(r, z, k) - 1$. Therefore, exactly one vertex of $G$ must be appear exactly twice in the tree, with all other vertices appearing exactly once. A vertex $v$ is repeated in $T^{-k}_u$ if and only if $r(v) = u$. Thus, exactly one vertex in $G$ has repeat $u$.
\end{proof}

Looking at Lemma~\ref{lem:outreg} all known examples led Tuite and Erskine to conjecture in 2019 that all $(r, z, k; -1)$- and $(r, z, k; +1)$-graphs are totally regular \cite[Conjecture 7.1]{tuite-2019}. 
In \cite{tuite-2022}, the same authors proved the result for the latter class of mixed graphs. We have not defined $(r, z, k; +1)$-graphs here, but the interested reader should consult \cite{tuite-2019} or \cite{tuite-2022} for more information. We are left with the following unproven conjecture.

\begin{conjecture}[See {\cite[Conjecture 7.1]{tuite-2019}}]
    \label{conj:totallyregular}
    All almost mixed Moore graphs are totally regular.
\end{conjecture}

Progress was made in \cite{tuite-2019}, where counting arguments and spectral theory were used to prove the following two results.

\begin{theorem}[See {\cite[Theorem 3.5]{tuite-2019}}]
    \label{thm:(1,1,3;-1)totallyregular}
    $(1,1,k;-1)$-graphs are totally regular for $k \geq 3$.
\end{theorem}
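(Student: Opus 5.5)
By Lemma~\ref{lem:outreg}, every $(1,1,k;-1)$-graph $G$ is out-regular with $d(v)=1$ and $d^+(v)=1$ for all $v$. Undirected degree is symmetric, so in-regularity comes down to showing $d^-(v)=1$ for every $v$. Since $\sum_v d^-(v)=\sum_v d^+(v)=n$, where $n=|V(G)|=M(1,1,k)-1$, it suffices to show that no vertex has $d^-(v)\ge 2$. Equivalently, it suffices to show that no vertex has $d^-(v)=0$.

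I would argue by contradiction. Suppose some vertex $x$ has $d^-(x)=0$. Then some vertex $y$ has $d^-(y)\ge 2$, say $a\to y$ and $b\to y$ with $a\ne b$.

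The first step is to count occurrences in the inverse Moore tree $T^{-k}_x$. Because $x$ has no directed in-neighbour, its in-neighbourhood is only $\{x^*\}$. The inverse tree therefore branches like a Moore tree except that it is pruned at the root and at in-deficient vertices, and it is enlarged at vertices with $d^-\ge 2$. Every vertex must still appear in it, since $\diam(G)=k$.

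The key counting tool is the out-trees. Each $T_u$ contains every vertex exactly once except the single repeat $r(u)$, which appears twice. Now count pairs $(u,w)$ with $d(u,w)\le k$ together with their multiplicities in $T_u$. On one side, summing over $u$ gives $n(n+1)$. On the other side, the number of occurrences of $w$ across all the $T_u$ is the number of walks of length at most $k$ ending at $w$ that respect the no-backtracking rule on edges. Writing this with the adjacency data gives a matrix identity of the form
\[
I+\sum_{j=1}^{k} P_j(A)=J+R,
\]
where $A$ is the mixed adjacency matrix, the $P_j$ are the non-backtracking walk polynomials, $J$ is the all-ones matrix and $R$ is the repeat matrix. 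Out-regularity makes every row of $R$ a standard basis vector.

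Next I take column sums. The column of $w$ counts its occurrences over all the trees, and in a totally regular graph each column sum equals $n+1$. I would express the column sums through the vector $\mathbf{d}^-$ of in-degrees. Using $\mathbf{1}^{\top}A=(\mathbf{d}^-+\mathbf{1})^{\top}$ and the fact that the undirected part $U$ is a perfect matching, so $U\mathbf{1}=\mathbf{1}$, each column sum becomes $n+1+\ell(\mathbf{d}^--\mathbf{1})$ for some linear expression $\ell$ with nonnegative coefficients. The column sums of $R$ are $\rho(w)=|r^{-1}(w)|$. Comparing the two sides gives $\rho(w)-1=\ell(\mathbf{d}^--\mathbf{1})_w$. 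In words, the number of vertices with repeat $w$ is controlled by the in-degree excess near $w$.

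The main obstacle is turning this relation into a contradiction. At $x$, the in-degree is $0$, so its inverse tree is too small: at depth $1$ it loses a full directed branch of size about $M(1,1,k-1)$. That loss can only be made up by vertices with large in-degree lying within distance $k$ of $x$. Meanwhile, every vertex still appears at least once and $\sum_w \rho(w)=n$.

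I would try to exploit the fact that a single out-tree can absorb only one duplicate. The in-branch at $y$ coming through $a$ and the one coming through $b$ both continue backwards. For a suitable $u$ at distance $k-1$ or $k$ behind both $a$ and $b$ (for instance $u$ on a short cycle through $y$), both routes to $y$ land in $T_u$, which forces $r(u)=y$ or a further repeat. A maximal-excess argument should then give two distinct repeated vertices in some $T_u$, which is impossible.

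The final verification is the weighting of $\ell$ for general $k\ge3$. I expect this to reduce to a Fibonacci-type recurrence, since the $Z'_t$ of Theorem~\ref{thm:r=z=1defectbound} appear there. The remaining work is checking positivity of the deficiency contributed at $x$ against the possible compensation.
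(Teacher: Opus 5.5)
\textbf{Verdict: genuine gap.} The paper does not prove this statement; it quotes it from Tuite and Erskine, so your proposal has to stand on its own. Your reduction is fine: out-regularity holds, and it suffices to exclude a vertex $x$ with $d^-(x)=0$. But the proposal never reaches a contradiction.

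Read column by column, your matrix identity says only that $w$ occurs $|V(T^{-k}_w)|=n+|r^{-1}(w)|$ times in total among the trees $T_u$. That is exactly the count in the proof of Lemma~\ref{lem:repperm}. Expanding $|V(T^{-k}_w)|$ in terms of the excesses $d^-(\nu)-1$ along the inverse tree is legitimate. However, the resulting relation holds in every $(1,1,k;-1)$-graph and yields no inequality beyond $|V(T^{-k}_x)|\ge n$. As you note yourself, that is compatible with $d^-(x)=0$ provided enough excess sits near $x$.

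The second half is a plan rather than an argument:
\begin{itemize}
\item A vertex $u$ at distance $k$ behind $a$ does not see $y$ through $a$ in $T_u$ at all, since that route has length $k+1$.
\item The existence of a $u$ within distance $k-1$ of both $a$ and $b$ is not shown.
\item Reaching $r(u)=y$ contradicts nothing, because one vertex may be the repeat of many vertices; your identity says $|r^{-1}(y)|=|V(T^{-k}_y)|-n$, which is typically large when $d^-(y)\ge 2$.
\item The ``maximal-excess argument'' that should put two repeats into one tree is never specified.
\item Theorem~\ref{thm:r=z=1defectbound} cannot be invoked, since it assumes total regularity.
\end{itemize}

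The missing idea is a way to locate the repeats forced by a vertex of in-degree zero. The tools of this paper supply one. Take $x\in S_G$, so $N^-(x)=\{x^*\}$, and write $w^+$ for the out-neighbour of $w$.
\begin{itemize}
\item The first assertion of Lemma~\ref{lem:nonlinevertex} still holds for $r=1$ (its proof does not use $r\ge 2$). It gives $r(u)=x^*$ for every $u$ with $x\notin\lines(u)$.
\item For each such $u$, Lemma~\ref{lem:arrowsinclusion} gives $\arrows(u)\subseteq(\{(x^*)^+\}\setminus S_G)\cup S'_G$.
\item Lemma~\ref{lem:sigmasums} gives $\sum_{a\in S'_G}d^-(a)=|S_G|+|S'_G|\le 2|S_G|$ and $d^-(x^*)\le 1+|S_G|$.
\end{itemize}

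For $k=3$ (so $n=10$) this already closes the argument. Here $x\in\lines(u)$ if and only if $u\to x^*$, and $\arrows(u)=\{(u^*)^+\}$. The map $u\mapsto(u^*)^+$ takes each value $a$ exactly $d^-(a)$ times, so
\[ n-1-|S_G| \le n-d^-(x^*) \le |r^{-1}(x^*)| \le 1+2|S_G|, \]
which forces $|S_G|\ge 3$. On the other hand, the vertices $x^*$ with $x\in S_G$ are distinct and $\sum_w d^-(w)=n$. Hence $n\ge\sum_{x\in S_G}|r^{-1}(x^*)|\ge|S_G|\,n-\sum_{x\in S_G}d^-(x^*)\ge(|S_G|-1)n$, which forces $|S_G|\le 2$, a contradiction.

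For $k\ge 4$ the same scheme needs one more ingredient: an estimate on the number of $u$ with $x\in\lines(u)$. This set is no longer just $Z^-(x^*)$; it contains every $u$ that reaches $x^*$ within layers $1,\dots,k-2$ of its directed branch. Bounding it, perhaps using the extra arrow vertices available for larger $k$, is the real work your sketch leaves undone.
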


\begin{theorem}[See {\cite[Theorem 2.8]{tuite-2019}}]
    \label{thm:diam2totallyregular}
    All almost mixed Moore graphs with diameter two are totally regular.
\end{theorem}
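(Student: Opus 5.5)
Since Lemma~\ref{lem:outreg} already gives out-regularity, I would prove the statement by showing that every vertex has directed in-degree $z$; the undirected degree is automatically $r$ everywhere. Let $G$ be an $(r,z,2;-1)$-graph of order $n = M(r,z,2)-1 = z+(r+z)^2$ by \eqref{eqn:mixedmoorek=2}. Let $A$ be its mixed adjacency matrix, with $A_{uv}=1$ if and only if $u\sim v$ or $u\rightarrow v$, so that $AJ=(r+z)J$.

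\textbf{Step 1: a matrix identity.} Let $P$ be the $0/1$ matrix with $P_{u,r(u)}=1$. Fix $u$. Every vertex appears in $T_u=T^2_u$, and only $r(u)$ appears twice. The length-two walks not recorded in the tree are exactly the $r$ walks $u\sim w\sim u$. Reading this off row by row gives
\[ I + A + A^2 - rI = J + P. \]

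\textbf{Step 2: column sums.} Write $c_v = r+d^-(v)$ for the column sums of $A$, and set $\varepsilon_v = c_v-(r+z)$. Then $\sum_v \varepsilon_v = 0$, because the total in-degree equals the total out-degree $nz$. Let $\rho(v)=|\{u : r(u)=v\}|$ be the column sums of $P$. Applying $\mathbf 1^{T}$ to the identity gives $c + cA - (r-1)\mathbf 1^T = n\mathbf 1^T + \rho$. Substituting $c=(r+z)\mathbf 1^T+\varepsilon$ and $n = z+(r+z)^2$, the constants cancel and we obtain
\[ \varepsilon\bigl((1+r+z)I + A\bigr) = \rho - \mathbf 1^T. \]
This is just the statement that each inverse tree $T^{-2}_v$, counted with multiplicity, covers $G$ with $\rho(v)$ extra copies of $v$.

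\textbf{Step 3: conclude.} If we knew $\rho\equiv 1$, i.e.\ that the repeat function is a bijection, the right-hand side would vanish. Then $\varepsilon$ is a left eigenvector of $A$ with eigenvalue $-(1+r+z)$. Every eigenvalue of $A$ has modulus at most the maximum row sum $r+z$, so this is impossible unless $\varepsilon=0$. Hence $d^-(v)=z$ for all $v$, and $G$ is totally regular.

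\textbf{Main obstacle.} The difficulty is Step 3's hypothesis. Lemma~\ref{lem:repperm} gives bijectivity only \emph{assuming} total regularity, so I would instead try to show directly that $\varepsilon=0$ from the full system.

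My first attempt would be the quadratic form
\[ \varepsilon\bigl((1+r+z)I+A\bigr)\varepsilon^T=\sum_v \varepsilon_v(\rho(v)-1). \]
The left side should be comparable to $(1+r+z)\|\varepsilon\|^2$ minus a term controlled by $\|A\|_2$. The bound $\|A\|_2\le\sqrt{(r+z)\max_v c_v}$ is useful only if large in-degrees can be controlled. For the right side I would use combinatorial bounds on $\rho$:
\begin{itemize}[label={}]
\item A vertex of excess in-degree, $\varepsilon_v>0$, lies on many length-$\le 2$ walks and so is forced to be a repeat often, giving $\rho(v)\ge 1+\varepsilon_v$ roughly.
\item Vertices with $\varepsilon_v<0$ should have $\rho(v)$ small.
\end{itemize}
The aim is to make $\sum_v\varepsilon_v(\rho(v)-1)$ at least $\sum_v \varepsilon_v^2$ in a way that contradicts the left-hand estimate unless $\varepsilon=0$.

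If that fails, I would fall back on a spectral argument with the row identity $A^2+A-(r-1)I=J+P$. Here $P$ is a row-stochastic $0/1$ matrix, i.e.\ a functional digraph. I would analyse its cycles as in the Moore-graph eigenvalue method to force $P$ to be a permutation, which gives $\rho\equiv1$ and finishes the proof via Step 3.
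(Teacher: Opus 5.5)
\paragraph{Verdict.} There is a genuine gap: the proposal reduces the theorem to an equivalent statement and then does not prove that statement.

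\paragraph{What is correct.} Your Steps 1--3 are correct. The identity $I+A+A^2-rI=J+P$ uses out-regularity and the absence of digons exactly as it should. The column relation $\varepsilon\bigl((1+r+z)I+A\bigr)=\rho-\mathbf 1^T$ checks out. The eigenvalue argument is fine once $\rho\equiv 1$.

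\paragraph{Why Steps 1--3 make no progress.} Every eigenvalue of $A$ has modulus at most $r+z$, so $(1+r+z)I+A$ is invertible. Your relation therefore says precisely that $\varepsilon=0$ if and only if $\rho\equiv 1$. You have only restated the theorem as ``the repeat function is a bijection''. As you note yourself, this is exactly what Lemma~\ref{lem:repperm} cannot supply without total regularity. The whole proof rests on your ``main obstacle'', and no argument for it is given. For comparison, the paper does not prove this statement at all; it quotes it from \cite{tuite-2019}, where counting and spectral arguments are used.

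\paragraph{Why your two sketched routes do not close the gap.}
\begin{itemize}
\item \emph{Quadratic form.} Your estimate for the left side and your hoped-for estimate for the right side are both lower bounds on the same number, so they cannot contradict each other. A contradiction needs an upper bound on $\varepsilon\bigl((1+r+z)I+A\bigr)\varepsilon^T$. But $\|A\|_2$ is uncontrolled exactly because the in-degrees are uncontrolled. In addition, bounds such as $\rho(v)\ge 1+\varepsilon_v$ are asserted, not derived.
\item \emph{Spectral fallback.} This is circular. The Moore-graph eigenvalue method needs $A$ to commute with $J$, and hence with $P$. Here $AJ=(r+z)J$ always holds, so $JA=AJ$ is equivalent to $JA=(r+z)J$, which is in-regularity itself.
\end{itemize}

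\paragraph{What is missing, and how far it gets you.} You need a combinatorial input that the column-sum projection cannot see. Here is one that makes your relation bite.
\begin{itemize}
\item Lemma~\ref{lem:undirectedindeg}, applied in $T_{v^*}$, gives $d^-(v)\ge z-1$, i.e.\ $\varepsilon\ge -1$.
\item Feeding this and $\rho\ge 0$ into your relation at any $x\in S_G$ gives $\sum_{w\in N^-(x)}\varepsilon_w\ge r+z$.
\item Sum this over $S_G$. Since $\sum_{x\in S_G}\sum_{w\in N^-(x)}\varepsilon_w=\sum_w \varepsilon_w|N^+(w)\cap S_G|\le (r+z)\sum_{w\in S'_G}\varepsilon_w=(r+z)|S_G|$ (Lemma~\ref{lem:sigmasums}), equality must hold throughout.
\item Equality gives: $\rho=0$ on $S_G$; $S_G$ is independent; and $N^+(w)\subseteq S_G$ for every $w\in S'_G$. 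In fact $N^+(w)=S_G$, because layer~2 of $T_w$ misses $S_G$.
\item Counting appearances in $T_w$ then yields $(|S'_G|-1)(r-1)=1$.
\end{itemize}
This leaves only the residual configuration $r=|S'_G|=2$, which still has to be excluded by a further structural argument. Until something of this kind is carried out, the proposal is a reformulation rather than a proof.
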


Finally, we give some more definitions from \cite{tuite-2019} which will be useful in the next section. First, if $G$ is an $(r,z,k;-\delta)$-graph, we define \[ 
    S_G = \{ v \in V(G) : d^-(v) < z \} \text{ and } S'_G = \{ v \in V(G) : d^-(v) > z \},
\]
so that $G$ is totally regular if and only if $S_G = S'_G = \emptyset$. For $v \in V(G)$, we set $\sigma^-(v) = z - d^-(v)$ and $\sigma^+(v) = d^-(v) - z$. Also, ${\sigmax(G) = \max\{ \sigma^-(v) : v \in V(G) \}}$.

For $u \in V(G)$, $R(u)$ is the multiset consisting of $n_v - 1$ occurrences of each $v \in V(G)$, where $n_v$ is the number of times $v$ appears in $T_u$. For any set $S$, $|S \cap R(u)|$ denotes the number of elements in $R(u)$ also in $S$, counting repeats. For example, if $R(u) = \{ 0,0,1,2,3 \}$ and $S = \{ 0,1 \}$, then $|S \cap R(u)| = 3$.

\section{The total regularity of almost mixed Moore graphs}
\label{sec:totalregularity}
\subsection{Technical lemmas and definitions}
First, we state as a lemma a result first given in the proof of \cite[Theorem 15]{tuite-2022}.
\begin{lemma}
    \label{lem:sigmasums}
    For any $(r,z,k;-\delta)$-graph $G$ with $r, z \geq 1$, $k \geq 2$, $\delta < r+z$,
    \[ \sum_{v \in S_G} \sigma^-(v) = \sum_{v' \in S'_G} \sigma^+(v').\]
\end{lemma}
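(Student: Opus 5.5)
We prove the identity by double counting the arcs of $G$. Since $\delta < r+z$, $r,z\ge 1$ and $k\ge 2$, Lemma~\ref{lem:outreg} applies, so $G$ is out-regular with directed out-degree $z$ and every vertex has exactly $z$ directed out-neighbours. Count the arc set $A(G)$ once by tails and once by heads. Every arc $(x,y)$ contributes exactly $1$ to $d^+(x)$ and exactly $1$ to $d^-(y)$. Hence
\[
\sum_{v \in V(G)} d^-(v) \;=\; |A(G)| \;=\; \sum_{v \in V(G)} d^+(v) \;=\; z\,|V(G)|.
\]
Rearranging gives $\sum_{v \in V(G)} \bigl(d^-(v) - z\bigr) = 0$.

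Now split $V(G)$ into three parts: $S_G$, $S'_G$, and the vertices with $d^-(v)=z$. The last part contributes nothing to the sum. For $v \in S_G$ the summand is $d^-(v)-z = -\sigma^-(v)$, and for $v' \in S'_G$ it is $d^-(v')-z = \sigma^+(v')$. Substituting,
\[
-\sum_{v\in S_G}\sigma^-(v) + \sum_{v'\in S'_G}\sigma^+(v') = 0,
\]
which is exactly the claimed equality.

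No step here is a genuine obstacle. The only point needing care is to invoke Lemma~\ref{lem:outreg}, which gives the uniform out-degree $z$ that makes the arc count equal $z|V(G)|$. Undirected edges play no role, since $d^-$ and $d^+$ count arcs only.
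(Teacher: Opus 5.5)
Your proof is correct and follows essentially the same route as the paper. Both use Lemma~\ref{lem:outreg} to get out-regularity, double count arcs to obtain $\sum_{v} d^-(v) = \sum_{v} d^+(v) = z|V(G)|$, and then split $\sum_{v}(d^-(v)-z)=0$ over $S_G$, $S'_G$ and the vertices with $d^-(v)=z$.
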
\begin{proof}
By Lemma~\ref{lem:outreg}, $G$ is out-regular. Therefore, 
\[ \sum_{v \in V(G)} d^-(v) = \sum_{v\in V(G)} d^+(v) = z|V(G)| .\]
It follows that 
\begin{align*}
    \sum_{v' \in S'_G} \sigma^+(v') - \sum_{v \in S_G} \sigma^-(v) &= \sum_{v' \in S'_G} (d^-(v') - z) - \sum_{v \in S_G} (z - d^-(v)) \\
    &= \sum_{v \in V(G)} d^-(v) - z|V(G)| = 0.
\end{align*}
\end{proof}

We know that if an $(r,z,k;-\delta)$-graph $G$ is not totally regular, then at least one of $S_G$ and $S'_G$ is non-empty. The above result now allows us to say more.

\begin{corollary}
    \label{cor:snonempty}
    Let $r, z \geq 1$, $k \geq 2$, $\delta < r+z$. If $G$ is an $(r,z,k;-\delta)$-graph which is not totally regular, then $S_G$ is non-empty.
\end{corollary}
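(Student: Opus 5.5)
The argument is by contradiction and uses only Lemma~\ref{lem:outreg} and Lemma~\ref{lem:sigmasums}. Let $G$ be an $(r,z,k;-\delta)$-graph with $r,z\geq 1$, $k\geq 2$ and $\delta<r+z$, and suppose $G$ is not totally regular but $S_G=\emptyset$.

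First we pin down what the failure of total regularity must be. By Lemma~\ref{lem:outreg}, $G$ is out-regular: every vertex has undirected degree $r$ and directed out-degree $z$. The undirected degree is therefore already constant, equal to $r$, and $d^+(v)=z$ for all $v$. So for $G$ to be in-regular with $d^-(v)=d^+(v)=z$ for all $v$, it suffices that $d^-(v)=z$ for every $v$. Since $G$ is not totally regular, some vertex $w$ has $d^-(w)\neq z$. This means $w\in S_G\cup S'_G$, so at least one of $S_G$, $S'_G$ is non-empty.

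Now apply Lemma~\ref{lem:sigmasums}, which gives
\[ \sum_{v \in S_G} \sigma^-(v) = \sum_{v' \in S'_G} \sigma^+(v'). \]
Since $S_G=\emptyset$, the left-hand side is $0$. We know $S_G\cup S'_G\neq\emptyset$, so $S'_G$ is non-empty. Every $v'\in S'_G$ has $\sigma^+(v')=d^-(v')-z>0$, so the right-hand side is a non-empty sum of positive integers and hence positive. This is a contradiction, so $S_G\neq\emptyset$.

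No step is a genuine obstacle. The only point needing care is the first step: non-total-regularity must come from the in-degrees, and this is exactly what Lemma~\ref{lem:outreg} guarantees by fixing the undirected and out-degrees.
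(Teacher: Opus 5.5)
Your proof is correct and matches the paper's argument: non-total-regularity forces $S_G \cup S'_G \neq \emptyset$, and Lemma~\ref{lem:sigmasums} then rules out $S_G=\emptyset$, because the right-hand side would be a non-empty sum of positive terms equal to zero. You also make explicit, via Lemma~\ref{lem:outreg}, why non-total-regularity must come from the in-degrees, a step the paper leaves implicit.
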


As in \cite{tuite-2022}, an \emph{arrow vertex} (with respect to $u$) is any vertex in layer $2 \leq t < k$ of an \emph{undirected} branch of $T_u$ with a \emph{directed} in-neighbour in layer $t-1$ of the same branch. A \emph{line vertex} (with respect to $u$) is any vertex in layer $2 \leq t < k$ of a \emph{directed} branch of $T_u$ with an \emph{undirected} neighbour in layer $t-1$ of the same branch. We let $\arrows(u)$ and $\lines(u)$ denote the set of arrow and line vertices with respect to $u$, respectively.

\begin{lemma}
    \label{lem:undirectedindeg}
    Let $G$ be an $(r, z, k; -\delta)$-graph with $k \geq 2$, $r, z \geq 1$, and $\delta < r+z$. If $u, v \in V(G)$ and $v$ appears in an undirected branch of $T_u$, then $d^-(v) \geq z-|N^-(v) \cap R(u)|$. Moreover, if $v \in \arrows(u)$, then $d^-(v) \geq z+1-|N^-(v) \cap R(u)|$.
\end{lemma}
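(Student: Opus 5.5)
The plan is a counting argument on $T_u$, relating the in-neighbours of $v$ to the tuples of $T_u$ that end at $v$. The assumption $\delta < r+z$ lets us apply Lemma~\ref{lem:outreg}, so every vertex has undirected degree exactly $r$ and directed out-degree exactly $z$. In particular $|N^-(v)| = r + d^-(v)$, and the first claim is equivalent to
\[
|N^-(v)| \;\geq\; r+z-|N^-(v)\cap R(u)|,
\]
that is, $v$ has at least $r+z$ in-neighbours once each in-neighbour $x$ is counted together with its $n_x-1$ extra occurrences in $T_u$.

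The key step, and the one I am least sure of, is to produce $r+z$ tuples of $T_u$, pairwise distinct, each ending at an in-neighbour of $v$. The structural input I would use is that $v$ lies in the branch rooted at some $u^*\in U(u)$. Let $v$ sit in layer $\ell$ of that branch, reached by a path $u\sim u^*\to\cdots\to v$. The undirected neighbours of $v$ give $r$ in-neighbours directly; one of them may be the tree-parent of $v$. For the remaining $z$ I would use $z$ directed in-neighbours of $v$, argued to occur in $T_u$ either off the path to $v$ or as repeated tuples. Concretely I would:
\begin{itemize}[label={}]
\item \emph{(i)} list the tuples of $T_u$ that end at a vertex of $N^-(v)$ and lie in layers $\leq k-1$;
\item \emph{(ii)} show that there are at least $r+z$ such tuples, by comparing the number of occurrences of $v$ in $T_u$ with the number of parents it would need in a Moore-type tree;
\item \emph{(iii)} note that $N^-(v)$ can receive at most $|N^-(v)\cap R(u)|$ tuples beyond one per vertex.
\end{itemize}
Subtracting (iii) from (ii) leaves at least $r+z-|N^-(v)\cap R(u)|$ distinct in-neighbours, hence $d^-(v)\geq z-|N^-(v)\cap R(u)|$.

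For the ``moreover'' part, suppose $v\in\arrows(u)$. Then the tree-parent $p$ of $v$ in the same undirected branch satisfies $p\to v$, so $p$ is a directed in-neighbour. The point to show is that $p$ is not already among the $z$ directed in-neighbours obtained in the main count, since the count uses in-neighbours reached off the tree path to $v$. Adding $p$ then gives an extra $+1$.

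The main obstacle is step (ii): pinning down why membership of $v$ in an \emph{undirected} branch forces $r+z$ tuples ending in $N^-(v)$. I expect the undirected edge $u\sim u^*$ to be essential here: the branch through $u^*$ cannot re-enter $u$ along that edge, which frees the corresponding counting slot. I would first check the argument on the diameter-two case, using \eqref{eqn:mixedmoorek=2}, before writing it for general $k$.
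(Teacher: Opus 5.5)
Your outer bookkeeping matches the paper's: count tuples of $T_u$ ending in $N^-(v)$, observe that there are exactly $|N^-(v)|+|N^-(v)\cap R(u)|$ of them (every vertex occurs, as $\diam(G)=k$), and use $|N^-(v)|=r+d^-(v)$. But step (ii), which you flag yourself, is the entire content of the lemma, and none of your suggestions for it works.

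\begin{itemize}
\item Using ``$z$ directed in-neighbours of $v$'' presupposes $d^-(v)\geq z$, which is essentially the conclusion.
\item Comparing occurrences of $v$ with the parents it ``would need'' yields one tuple per occurrence, and $v$ may occur only once. Most of the required in-neighbours appear instead as children of $v$, or at the ends of branches in which $v$ itself never appears.
\item The restriction in (i) to layers $\leq k-1$ makes (ii) false. If $v$ first appears in an undirected branch in layer $k-1$ via an edge, the $r-1$ in-neighbours supplied by its children lie in layer $k$. So for $r\geq 2$ you may find only $z+1<r+z$ tuples, even when $d^-(v)=z$ and nothing in $N^-(v)$ is repeated.
\end{itemize}

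The missing idea is to apply the diameter bound to the roots of the directed branches. For $w\in Z^+(u)$ with $w\neq v$, the penultimate vertex of a shortest $w$--$v$ path is an in-neighbour of $v$ within distance $k-1$ of $w$. Because $u\to w$ is an arc, the $w$-branch contains every non-backtracking walk of length at most $k-1$ from $w$. So each directed branch contains an in-neighbour of $v$, giving $z$ tuples. (If $w=v$, the $r$ undirected neighbours of $v$ in layer $2$ of the $v$-branch serve instead.)

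Your remark about $u\sim u^*$ has the role of the edge reversed. It is because a $y$-branch with $y\in U(u)$ may not step back to $u$ that undirected branches are \emph{not} guaranteed an in-neighbour; this is why only the branch containing $v$ is used. If $v$ appears there in a layer $\ell<k$, its parent together with its undirected neighbours other than the parent (children in layer $\ell+1\leq k$) give $r$ tuples, and $r+1$ when the parent reaches $v$ by an arc. Applying this to an arrow appearance (in a layer $t<k$ by definition) gives the ``moreover'' part. There is nothing to check about $p$: tuples in different branches are distinct, and any coincidence of vertices is absorbed by $R(u)$.

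The case where $v$ appears in undirected branches only in layer $k$ needs separate treatment, since $v$ has no children there.
\begin{itemize}
\item If $d(u,v)=k$, each of the $r+z$ branches contains an in-neighbour, because a shortest path from $y\in U(u)$ cannot pass through $u$.
\item If $1\leq d(u,v)<k$, then $v$ appears earlier in a directed branch, which supplies at least $r$ tuples. The other $z-1$ directed branches supply one each, and the parent of $v$ in layer $k-1$ one more.
\item If $v=u$, the $r$ undirected neighbours in layer $1$ plus one in-neighbour per directed branch suffice.
\end{itemize}
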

\begin{proof}
    Let $G$ be an $(r, z, k; -\delta)$-graph with $k \geq 2$, $r, z \geq 1$, and $\delta < r+z$. By Lemma~\ref{lem:outreg}, $G$ is out-regular. Let $u, v \in V(G)$ be such that $v$ appears in an undirected branch of $T_u$. Let $\ell$ be the smallest integer such that $v$ appears in layer $\ell$ of an undirected branch. 
    We first show that there are at least $r+z$ appearances of in-neighbours of $v$ in $T_u$ (including repetitions).
    \begin{itemize}
    \item If $\ell=1$, then $u \in N^-(v)$. Also, $r-1$ in-neighbours of $v$ appear in the $v$-branch of $T_u$. Since $d(w, v) \leq k$ for each $w \in Z^+(u)$, there must be an in-neighbour of $v$ appearing in each directed branch. Hence, there are at least $r+z$ appearances of in-neighbours of $v$ in $T_u$. 

    \item If $1 < \ell < k$, then there are at least $r$ in-neighbours of $v$ in one of the undirected branches of $T_u$. Additionally, at least one in-neighbour of $v$ appears in each of the $z$ directed branches. 
    
    If $v \in \arrows(u)$, then there are at least $r+1$ in-neighbours of $v$ in one of the undirected branches. Hence, there are at least $r+z+1$ appearances of in-neighbours of $v$ in $T_u$. 
    
    \item Finally, suppose that $\ell = k$. If $d(u,v) = k$, then at least one in-neighbour of $v$ appears in each of the $r+z$ branches of $T_u$. If $1 \leq d(u, v) < k$, then $v$ and at least $r$ of its in-neighbours appear in one of the directed branches. Also, at least one in-neighbour of $v$ appears in each of the $z-1$ other directed branches and in one undirected branch. If $u=v$, then $r$ in-neighbours of $v$ appear in layer $1$ and at least one in-neighbour appears in each of the $z$ directed branches.
    \end{itemize}
    Accounting for repetitions, it follows that $d(v) + d^-(v) \geq r+z - |N^-(v) \cap R(u)|$. But $G$ is out-regular, so $d(v) = r$ and $d^-(v) \geq z - |N^-(v) \cap R(u)|$. If $v \in \arrows(u)$, then we may further conclude that $d^-(v) \geq z + 1 - |N^-(v) \cap R(u)|$.
\end{proof}

\begin{lemma}
    \label{lem:arrowsinclusion}
    Let $G$ be an $(r,z,k;-1)$-graph with $r,z \geq 1$, $k \geq 3$. For all $u \in V(G)$, $\arrows(u) \subseteq (N^+(r(u)) \setminus S_G) \cup S'_G$.
\end{lemma}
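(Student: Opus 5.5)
The plan is to fix $u$ and an arrow vertex $v \in \arrows(u)$, and to show that either $v \in S'_G$, or $v \in N^+(r(u))$ with $v \notin S_G$. The main tool is Lemma~\ref{lem:undirectedindeg}, which gives
\[ d^-(v) \geq z+1-|N^-(v) \cap R(u)|. \]
Since $G$ has defect one, $R(u)$ consists of the single element $r(u)$. Hence $|N^-(v) \cap R(u)|$ is $1$ if $r(u) \in N^-(v)$ and $0$ otherwise.

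\textbf{Case $r(u) \notin N^-(v)$.} Lemma~\ref{lem:undirectedindeg} gives $d^-(v) \geq z+1$, so $v \in S'_G$.

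\textbf{Case $r(u) \in N^-(v)$.} Then $v \in N^+(r(u))$, since in- and out-neighbourhoods correspond: $r(u) \in N^-(v)$ means $r(u) \sim v$ or $r(u) \to v$, i.e.\ $v \in U(r(u)) \cup Z^+(r(u))$. Lemma~\ref{lem:undirectedindeg} then gives $d^-(v) \geq z$, so $v \notin S_G$. Thus $v \in N^+(r(u)) \setminus S_G$.

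Combining the two cases gives the inclusion. The only points needing care are the following.
\begin{enumerate}
\item The hypotheses of Lemma~\ref{lem:undirectedindeg} hold: $\delta = 1 < r+z$ because $r,z \geq 1$. An arrow vertex lies in an undirected branch by definition, and $k \geq 3$ ensures arrow vertices can exist in layers $2 \leq t < k$.
\item $|N^-(v) \cap R(u)| \leq 1$ holds precisely because $R(u)$ is a singleton multiset. This uses the out-regularity from Lemma~\ref{lem:outreg}, which gives $|V(T_u)| = M(r,z,k)$, so exactly one vertex is repeated, and only once.
\end{enumerate}

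I expect no serious obstacle. The main point to verify is that the counting inside Lemma~\ref{lem:undirectedindeg} genuinely loses at most one in-neighbour appearance per element of $R(u)$. That is exactly how $|N^-(v)\cap R(u)|$ enters the lemma, so the bound applies directly.
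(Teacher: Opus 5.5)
Your proof is correct and follows the paper's argument essentially verbatim. You apply Lemma~\ref{lem:undirectedindeg} to the arrow vertex $v$, use $R(u)=\{r(u)\}$, and split on whether $r(u)\in N^-(v)$. This gives $v\in S'_G$ in one case and $v\in N^+(r(u))\setminus S_G$ in the other.
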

\begin{proof}
    Let $G$ be as described, $u \in V(G)$, and $v \in \arrows(u)$. Since $r + z > 1$, Lemma~\ref{lem:undirectedindeg} gives $d^-(v) \geq z+1-|N^-(v) \cap R(u)|$. Notice that $R(u) = \{ r(u) \}$. If $r(u) \notin N^-(v)$, then $d^-(v) \geq z+1$ and $v \in S'_G$. Otherwise, $d^-(v) \geq z$ and $v \in N^+(r(u)) \setminus S_G$.
\end{proof}

\subsection{$(r, z, k; -1)$-graphs with $r \geq 1$, $z \geq 2$, $k \geq 3$}

We proceed to prove that all almost mixed Moore graphs with directed degree at least two and diameter at least three are totally regular. This result will be deduced from a new bound on the defect of a mixed graph. Let $G$ be an $(r,z,k;-\delta)$-graph with $r, z \geq 1$, and $k \geq 3$.

\begin{lemma}
    If $\delta < z$, then for each $v \in V(G)$, $d^-(v) > 0$.
    \label{lem:vindeg}
\end{lemma}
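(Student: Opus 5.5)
Suppose for a contradiction that some $v \in V(G)$ has $d^-(v) = 0$. Then $N^-(v) = U(v)$, so $v$ has exactly $r$ in-neighbours, all undirected (by Lemma~\ref{lem:outreg}, since $\delta < z \le r+z$, $G$ is out-regular with $d(v)=r$). The plan is to choose a suitable root $u$ and count how many directed branches of $T_u$ are forced to repeat a vertex. The aim is to show that $T_u$ contains at least $z$ repeated appearances, so that $|V(G)| \le M(r,z,k) - z$. This would give $\delta \ge z$, a contradiction.

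Take $u = v^*$ for some undirected neighbour $v^*$ of $v$, and consider the mixed Moore tree $T_u$. Since $\diam(G) = k$, $v$ must appear in every branch of $T_u$ in which it is reachable within depth $k$. More usefully, $d(w,v) \le k$ for each directed out-neighbour $w \in Z^+(u)$. The last step of any shortest $w,v$-path must be an edge $x \sim v$, with $x \in U(v)$. Hence each of the $z$ directed branches of $T_u$ contains an appearance of some undirected neighbour of $v$ in layer at most $k-1$.

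Now we account for where these neighbours already appear. The vertex $v$ sits in layer $1$, in the $v$-branch (an undirected branch). The other $r-1$ undirected neighbours of $v$ appear as children of $v$ in layer $2$ of the $v$-branch, since $k\ge 3 > 2$. The vertex $u$ itself is the root. Thus every element of $U(v)$ already appears in layer $\le 2$ outside the directed branches. Any further appearance in a directed branch is therefore a repetition. Each of the $z$ directed branches contributes at least one such repeated appearance, and these are distinct tuples. So $T_u$ has at least $z$ surplus appearances, and $|V(G)| \le |V(T_u)| - z = M(r,z,k) - z$, giving $\delta \ge z$.

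The main obstacle is the boundary bookkeeping. The neighbour reached in a directed branch must lie at layer $\le k$, so it is a genuine node of $T_u$. Its appearance must not coincide with one of the "original" appearances counted above, which holds because those lie in undirected branches or at the root. We also need the case where the neighbour found in a directed branch is $u$ itself; that is still a repeat of the root. If $r=1$, then $U(v)=\{u\}$ and every directed branch must contain $u$ again, so the argument goes through uniformly.
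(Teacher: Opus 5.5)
Your proof is correct and follows essentially the paper's route: the paper also roots the tree at $v^*$ and applies Lemma~\ref{lem:undirectedindeg}, whose $\ell=1$ case is exactly your count of $r+z$ appearances of in-neighbours of $v$, and concludes $d^-(v)\geq z-|N^-(v)\cap R(v^*)|\geq z-\delta>0$. One small slip: the in-neighbour of $v$ found in a directed branch lies in layer at most $k$ of $T_u$ (it is at distance at most $k-1$ from the layer-$1$ vertex $w$), not layer at most $k-1$, as your final paragraph correctly uses; this does not affect the argument.
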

\begin{proof}
    Since $\delta < z$, $G$ is out-regular by Lemma~\ref{lem:outreg}.
    Since $r \geq 1$, we may choose $v^* \in U(v)$.
    Then, $v$ lies in an undirected branch of $T_{v^*}$ and by Lemma~\ref{lem:undirectedindeg}, $d^-(v) \geq z - |N^-(v) \cap R(v^*)|$. But $|N^-(v) \cap R(v^*)| \leq \delta < z$, so $d^-(v) > 0$.
\end{proof}

\begin{theorem}
    \label{thm:defectboundz>1}
    Either $\delta = z = \sigmax(G)$ or $\delta > \sigmax(G)$.
\end{theorem}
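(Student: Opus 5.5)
Throughout, $G$ is an $(r,z,k;-\delta)$-graph with $r,z\ge 1$ and $k\ge 3$. The claim amounts to $\delta\ge\sigmax(G)$, with equality possible only when $\delta=z$.

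\emph{Reduction to the out-regular case.} Since $d^-(v)\ge 0$, we have $\sigmax(G)\le z$. So if $\delta\ge r+z$, then $\delta>z\ge\sigmax(G)$ and we are done. Hence assume $\delta<r+z$. By Lemma~\ref{lem:outreg}, $G$ is out-regular, so $|V(T_u)|=M(r,z,k)$ for every $u$. Every vertex of $G$ appears in $T_u$, and $G$ has $M(r,z,k)-\delta$ vertices. Therefore the multiset $R(u)$ has exactly $\delta$ elements for every $u\in V(G)$.

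\emph{The inequality.} Pick $v$ with $\sigma^-(v)=\sigmax(G)$, so $d^-(v)=z-\sigmax(G)$. Since $r\ge 1$, choose $v^*\in U(v)$. Then $v$ lies in layer $1$ of the undirected $v$-branch of $T_{v^*}$. Lemma~\ref{lem:undirectedindeg} gives
\[
z-\sigmax(G)=d^-(v)\ge z-|N^-(v)\cap R(v^*)|,
\]
so $\sigmax(G)\le |N^-(v)\cap R(v^*)|\le |R(v^*)|=\delta$. This proves $\delta\ge\sigmax(G)$.

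\emph{The equality case.} It remains to exclude $\delta=\sigmax(G)<z$. Assume this for a contradiction; I expect this step to be the main obstacle.

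First, since $\delta<z$, Lemma~\ref{lem:vindeg} shows that every vertex has a directed in-neighbour. Also $\sigmax(G)=\delta\ge 1$, so $v\in S_G$.

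Next, every inequality in the $\ell=1$ case of the proof of Lemma~\ref{lem:undirectedindeg} must be tight. Concretely:
\begin{itemize}
\item the whole multiset $R(v^*)$ consists of in-neighbours of $v$;
\item each of the $z$ directed branches of $T_{v^*}$ contains exactly one appearance of an in-neighbour of $v$ (the count of in-neighbour appearances is exactly $r+z$);
\item $v$ appears in $T_{v^*}$ only in layer $1$ of the $v$-branch, because every repeat is an in-neighbour of $v$ and $v\notin N^-(v)$.
\end{itemize}

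Because $k\ge 3$, every directed out-neighbour $w$ of $v$ in the $v$-branch is an arrow vertex with respect to $v^*$. By Lemma~\ref{lem:undirectedindeg}, $d^-(w)\ge z+1-|N^-(w)\cap R(v^*)|$.

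\emph{Deriving the contradiction.} I would combine this with a second tree. The plan is to re-run the tight counting with a different root $u$ for which $v$ sits in a non-extremal position, for instance $u\in Z^-(v)$ or a vertex at distance $2$ from $v$. Lemma~\ref{lem:undirectedindeg}, or its directed analogue obtained by the same counting, should then force at least $z-d^-(v)+1=\delta+1$ repeats among the in-neighbours of $v$ in $T_u$. That contradicts $|R(u)|=\delta$.

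If a single tree does not suffice, the fallback is a global count. Sum the deficits using Lemma~\ref{lem:sigmasums}. The tightness conditions above force all arrow vertices of $T_{v^*}$ that are not out-neighbours of in-neighbours of $v$ into $S'_G$. Then show that the resulting total excess $\sum_{S'_G}\sigma^+$ cannot be balanced by deficits totalling only $\delta<z$, using the fact that each directed branch carries an in-neighbour of $v$ exactly once.
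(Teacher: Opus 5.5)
Your reduction is correct: $\sigmax(G)\le z$ disposes of $\delta\ge r+z$, out-regularity gives $|R(u)|=\delta$, and rooting at $v^*\in U(v)$ gives $\delta\ge\sigmax(G)$. So everything comes down to excluding $\delta=\sigmax(G)<z$. That exclusion is the actual content of the theorem, and you have not proved it. The second-tree paragraph only says that some root ``should'' force $\delta+1$ repeats, and the global-count fallback is never carried out.

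The tightness facts you extract from $T_{v^*}$ do not help by themselves, because the arrow vertices they involve are the out-neighbours of $v$, not $v$. Your concrete candidate $u\in Z^-(v)$ also fails to give the extra unit. There $v$ sits in a directed branch, and the guaranteed count is only $r+z$ appearances of in-neighbours of $v$: namely $u$, the $r$ undirected neighbours of $v$ in layer $2$ of the $v$-branch, and one in each of the other $z-1$ directed branches. Nothing forces an in-neighbour of $v$ into an undirected branch of $T_u$, since a shortest path from $u^*\in U(u)$ to $v$ may run back through $u$. That count merely reproduces $\delta\ge\sigmax(G)$.

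The missing idea is to choose the root so that $v$ itself is an arrow vertex, and then apply the second part of Lemma~\ref{lem:undirectedindeg}.
\begin{itemize}
\item You already noted that Lemma~\ref{lem:vindeg} gives some $w\in Z^-(v)$.
\item Since $r\ge 1$ and $G$ is out-regular, $w$ has an undirected neighbour $u$. Moreover $u\ne v$, because at most one edge or arc joins $v$ and $w$.
\item Then $v$ lies in layer $2<k$ of the undirected $w$-branch of $T_u$, with its directed in-neighbour $w$ in layer $1$. Hence $v\in\arrows(u)$.
\item Lemma~\ref{lem:undirectedindeg} now gives $d^-(v)\ge z+1-|N^-(v)\cap R(u)|\ge z+1-\delta$, i.e.\ $\delta\ge\sigmax(G)+1$ whenever $\delta<z$.
\end{itemize}
Combine this with the trivial observation that when $\delta\ge z\ge\sigmax(G)$, the only options are $\delta>\sigmax(G)$ or $\delta=z=\sigmax(G)$. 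Together these form the paper's entire proof. This route also makes your separate inequality step and your equality-case analysis unnecessary.
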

\begin{proof}
    If $\delta \geq z$, then $\delta > \sigmax(G)$ unless $\delta = z = \sigmax(G)$. Now suppose that $\delta < z$ and choose $v \in V(G)$ such that $\sigma^-(v) = \sigmax(G)$. By Lemmas~\ref{lem:outreg} and \ref{lem:vindeg}, we may choose $u \in V(G)$ such that $u \sim w \rightarrow v$ for some $w \in V(G) \setminus \{ u, v \}$. Then, $v \in \arrows(u)$ and by Lemma~\ref{lem:undirectedindeg}, $d^-(v) \geq z+1-|N^-(v) \cap R(u)|$. But $|N^-(v) \cap R(u)| \leq \delta$, so $\delta > z - d^-(v) = \sigmax(G)$.
\end{proof}

Notice that the above theorem also shows that there are no mixed Moore graphs for $r,z \geq 1$ and $k \geq 3$.
If $G$ is not totally regular, Corollary~\ref{cor:snonempty} gives $\sigmax(G) \geq 1$. It then follows from the above result that $\delta > 1$ whenever $z>1$.

\begin{corollary}
    \label{thm:totallyregz>1}
    All $(r,z,k;-1)$-graphs with $r \geq 1$, $z \geq 2$, $k \geq 3$ are totally regular.
\end{corollary}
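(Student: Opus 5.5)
The plan is to argue by contradiction, combining Corollary~\ref{cor:snonempty} with Theorem~\ref{thm:defectboundz>1}. Let $G$ be an $(r,z,k;-1)$-graph with $r \geq 1$, $z \geq 2$ and $k \geq 3$, so that $\delta = 1$. The hypotheses of the earlier results hold, since $\delta = 1 < r+z$ and $\delta = 1 < z$. In particular, Lemma~\ref{lem:outreg} shows that $G$ is out-regular with undirected degree $r$ and directed out-degree $z$.

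Suppose, for a contradiction, that $G$ is not totally regular. Corollary~\ref{cor:snonempty} applies with $r,z \geq 1$, $k \geq 2$ and $\delta < r+z$, and gives $S_G \neq \emptyset$. Hence there is a vertex $v$ with $d^-(v) < z$, so $\sigma^-(v) \geq 1$ and therefore $\sigmax(G) \geq 1$.

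Next apply Theorem~\ref{thm:defectboundz>1} to $G$, which is valid because $k \geq 3$ and $r,z \geq 1$. It gives either $\delta = z = \sigmax(G)$ or $\delta > \sigmax(G)$. The first alternative would force $z = \delta = 1$, contradicting $z \geq 2$. The second alternative gives $1 = \delta > \sigmax(G) \geq 1$, which is also impossible. Both cases fail, so $G$ must be totally regular.

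There is no real obstacle here; the only care needed is checking the hypotheses of each cited result. In particular, Theorem~\ref{thm:defectboundz>1} was stated in a setting where $G$ is an $(r,z,k;-\delta)$-graph with $r,z \geq 1$ and $k \geq 3$, and the case $\delta = 1 < z$ is exactly where its proof uses Lemma~\ref{lem:vindeg}. Note that the definition of $\sigmax(G)$ allows negative values of $\sigma^-$, but this does not matter, because nonemptiness of $S_G$ already gives $\sigmax(G) \geq 1$.
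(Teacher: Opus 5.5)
Your proof is correct and matches the paper's argument: Corollary~\ref{cor:snonempty} gives $\sigmax(G) \geq 1$ for a non-totally-regular $G$, and Theorem~\ref{thm:defectboundz>1} then forces $\delta > 1$ once $z \geq 2$. You also check explicitly that the alternative $\delta = z = \sigmax(G)$ is ruled out by $z \geq 2$, which the paper leaves implicit.
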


\subsection{$(r,z,k;-1)$-graphs with $r \geq 2, z=1, k \geq 3$}
\label{subsec:z=1totalregularity}
We now complete the proof of Conjecture~\ref{conj:totallyregular}. In this section, let $G$ be any $(r,1,k;-1)$-graph with $r \geq 2$, $k \geq 3$.

\begin{lemma}
    \label{lem:nonlinevertex}
    For all $u \in V(G)$, $S_G \setminus \lines(u) \subseteq N^+(r(u))$. Furthermore, if there exists $v \in S_G \setminus \lines(u)$ with $d(u,v) < k$, then $r(u)$ lies in layer $k$ of the directed branch of $T_u$.
\end{lemma}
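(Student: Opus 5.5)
Throughout, $z=1$, and by Lemma~\ref{lem:outreg} $G$ is out-regular with undirected degree $r$ and directed out-degree $1$. Since $G$ has defect one, $R(u)=\{r(u)\}$. If $v \in S_G$, then $d^-(v)<1$, so $d^-(v)=0$ and $N^-(v)=U(v)$: the in-neighbours of $v$ are exactly its $r$ undirected neighbours. The plan is to show, for such a $v$ not in $\lines(u)$, that in-neighbours of $v$ occupy at least $r+1$ positions of $T_u$. Since only $r$ distinct in-neighbours exist and $r(u)$ is the only vertex appearing twice, this forces $r(u)\in N^-(v)$, i.e.\ $v\in N^+(r(u))$. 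For the second statement I will track the layer in which the duplicated in-neighbour appears.

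\textbf{Step 1 ($v$ appears in an undirected branch).} Lemma~\ref{lem:undirectedindeg} gives $0=d^-(v)\ge 1-|N^-(v)\cap R(u)|$, so $r(u)\in N^-(v)$ and the first claim follows. For the second claim, assume $d(u,v)<k$ and let $\ell<k$ be the first layer in which $v$ appears. In the case where this is in an undirected branch, I account for the positions of the in-neighbours of $v$ exactly as in the proof of Lemma~\ref{lem:undirectedindeg}:
\begin{itemize}
\item the parent of $v$ and its $r-1$ undirected children give $r$ distinct in-neighbours inside that undirected branch, all in layers $\le \ell+1\le k$;
\item the unique directed branch, rooted at $u^+$, must contain an in-neighbour $y$ of $v$ so that $v$ is reached within distance $k$.
\end{itemize}
This $y$ is then the repeat $r(u)$. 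I will argue that this copy of $y$ cannot lie in a layer $<k$ of the directed branch. If it did, then either $v$ would reappear in a layer $\le k$ as a child of $y$, giving a second repeat, which is impossible; or $v$ would be the tree-parent of $y$. In the latter case $v$ appears in the directed branch, and since $d^-(v)=0$ the vertex $v$ has an undirected parent there, so $v$ is a line vertex unless it sits in layer $k$, which contradicts the parent position. Hence $r(u)$ lies in layer $k$ of the directed branch.

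\textbf{Step 2 ($v$ appears only in the directed branch, or $v=u$).} Let $v\ne u$ appear first in layer $\ell$ of the directed branch. Since $d^-(v)=0$, we have $\ell\ge2$ and the parent of $v$ there is an undirected neighbour. As $v\notin\lines(u)$, this forces $\ell=k$, so $d(u,v)=k$ and the second claim is vacuous here. For the first claim I count again. Only one in-neighbour of $v$ in layers $\le k-1$ produces the copy of $v$, since a second one would create a second occurrence of $v$. The remaining $r-1\ge1$ neighbours $x$ of $v$ therefore lie only in layer $k$. I then use that the undirected branches must reach $v$ within distance $k$ from their roots; equivalently, I examine the inverse tree $T_v^{-k}$ via the argument of Lemma~\ref{lem:repperm}. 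This should produce an extra occurrence of some in-neighbour of $v$, which must be $r(u)$. The case $v=u$ is handled the same way: the $r$ neighbours in layer $1$ together with an in-neighbour of $u$ in the directed branch give the $r+1$ positions.

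\textbf{Main obstacle.} I expect Step 2 to be the hardest part. Reachability from a branch root only guarantees occurrences in layers up to $k+1$, not $k$, so the required extra position of an in-neighbour must be squeezed out of the layer-$k$ structure. My first attempt will be to combine $d^-(v)=0$ with non-backtracking in the layer-$k$ parent of $v$ to locate a second occurrence. Failing that, I will apply the counting of Lemma~\ref{lem:undirectedindeg} to the tree rooted at an undirected neighbour of $v$ and transfer the conclusion back to $T_u$.
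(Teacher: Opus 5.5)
Your Step 1 is correct and is essentially the paper's argument. Your handling of the second claim, including the case where $v$ is the tree-parent of $y$, is if anything more careful than the paper's.

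\textbf{The gap is Step 2.} This is the case where $v\neq u$ occurs only in layer $k$ of the directed branch. You never produce the $(r+1)$-st occurrence of an in-neighbour of $v$, and the obstacle you name is not real. You do not need $v$ itself to occur in an undirected branch, only an in-neighbour of $v$ in layer at most $k$, and reachability gives exactly this.

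\begin{itemize}
\item For each $u^*\in U(u)$, take a shortest $u^*$--$v$ path. It has length at most $k$.
\item It cannot start with the edge $u^*u$, since then $d(u,v)\le k-1$, whereas you have shown $d(u,v)=k$.
\item Being a path, it never backtracks, so it lies in the $u^*$-branch of $T_u$.
\item Its penultimate vertex is an in-neighbour of $v$ in layer at most $k$ of that branch. That $v$ itself would sit in layer $k+1$ is irrelevant.
\end{itemize}

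This is the same reasoning you already used in Step 1 to find $y$ in the directed branch. The $r$ undirected branches give $r$ occurrences of in-neighbours of $v$, and the tree-parent of $v$ in layer $k-1$ of the directed branch gives one more. Since $|N^-(v)|=|U(v)|=r$, some in-neighbour occurs twice and so equals $r(u)$. This is exactly how the paper handles the case. The detours through $T_v^{-k}$, or through a tree rooted at a neighbour of $v$, are unnecessary.

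\textbf{Two smaller points.}

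First, you claim the other $r-1$ neighbours of $v$ occur only in layer $k$, on the grounds that ``a second one would create a second occurrence of $v$''. That is not yet a contradiction, since a priori $v$ could itself be $r(u)$. This is only ruled out after the count above gives $r(u)\in N^-(v)$, so $r(u)\neq v$. The claim is not needed anyway.

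Second, for $v=u$ the second statement applies, because $d(u,u)=0<k$. So you must also show that the repeated in-neighbour of $u$ lies in layer $k$ of the directed branch. Your Step 1 argument transfers: if it were in a layer below $k$, then $u$ would reappear in $T_u$, giving a second repeated vertex. You should state this explicitly rather than saying the case is ``handled the same way''.
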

\begin{proof}
Let $u \in V(G)$ and $v \in S_G \setminus \lines(u)$, with $v \neq u$. By Lemma~\ref{lem:outreg}, $d(v) = r$. As $d^-(v) = 0$ and $v \notin \lines(u)$, $v$ appears in an undirected branch of $T_u$ or in layer $k$ of the directed branch. Since $d(u^*, v) \leq k$ for each $u^* \in U(u)$, if $v$ does not appear in an undirected branch, then it has an in-neighbour in each of these branches. Hence, there are at least $r+1$ appearances of in-neighbours of $v$ in $T_u$ and (as reasoned in the proof of Lemma~\ref{lem:undirectedindeg}) $d^-(v) \geq 1 - |N^-(v) \cap R(u)|$. If $v$ appears in an undirected branch, then Lemma~\ref{lem:undirectedindeg} gives the same result.
But $d^-(v) = 0$, so $r(u) \in N^-(v)$. 

Suppose additionally that we may choose $v$ such that $1 \leq d(u, v) < k$. Now, $v$ lies in an undirected branch of $T_u$ outside of layer $k$. Thus, there must be $r$ in-neighbours of $v$ which are either equal to $u$ or in an undirected branch of $T_u$. To prevent more than one repetition, there must be precisely one in-neighbour of $v$ in the directed branch. This vertex is $r(u)$. If $r(u)$ did not appear in layer $k$ of the directed branch, then $v$ would also be repeated in $T_u$ --- a contradiction.

Finally, suppose that $u \in S_G \setminus \lines(u)$. 
Each in-neighbour of $u$ appears in layer $1$ of $T_u$. Since $\diam(G) = k$ and $G$ has defect one, $r(u)$ is an in-neighbour of $u$ and appears in layer $k$ of the directed branch.
\end{proof}

\begin{theorem}
    \label{thm:totallyregz=1}
    $G$ is totally regular.
\end{theorem}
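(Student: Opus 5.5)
We argue by contradiction. Suppose $G$ is not totally regular. By Corollary~\ref{cor:snonempty}, $S_G \neq \emptyset$. Since $z=1$ and $\delta = 1 < z+1$, the argument of Lemma~\ref{lem:vindeg} does not apply directly, but Theorem~\ref{thm:defectboundz>1} gives $\sigmax(G) \le 1$. Hence every $v \in S_G$ has $d^-(v)=0$. By Lemma~\ref{lem:sigmasums}, $|S_G| = \sum_{v' \in S'_G} \sigma^+(v')$. The plan is to fix one $v \in S_G$ and consider many roots $u$ with $v \notin \lines(u)$. For each such $u$, Lemma~\ref{lem:nonlinevertex} forces $r(u) \in N^-(v) = U(v)$, so $r(u)$ lies in a set of only $r$ vertices. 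We then show that too many roots share a repeat in $U(v)$, or share a specific layer position for it, which contradicts Lemma~\ref{lem:repperm} or the structure of the trees.

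The first step is to understand when $v \in \lines(u)$. Since $v$ has no directed in-neighbours, $v$ is a line vertex with respect to $u$ exactly when it appears in layer $2 \le t<k$ of the directed branch via an undirected edge. For $k \ge 3$ the set of such $u$ is small: the out-neighbour $u^+$ of $u$ must lie at distance at most $k-2$ from some $v^* \in U(v)$, along a path ending in an edge. I would instead take $u$ ranging over $U(v)$ itself and over vertices of the form $w$ with $w \sim v^*$. For $u=v^*\in U(v)$, the vertex $v$ lies in layer $1$ of an undirected branch, so $v \notin \lines(u)$ and $d(u,v)=1<k$. Lemma~\ref{lem:nonlinevertex} then gives that $r(v^*) \in U(v)$ lies in layer $k$ of the directed branch of $T_{v^*}$. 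Next, take $u=v$. Here $v \notin \lines(v)$, because $v$ is in layer $0$ and any later appearance would itself be a repeat. The final paragraph of the proof of Lemma~\ref{lem:nonlinevertex} gives $r(v) \in U(v)$, appearing in layer $k$ of the directed branch of $T_v$.

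So the map $r$ sends the $r+1$ vertices of $\{v\} \cup U(v)$ into the $r$-element set $U(v)$. Lemma~\ref{lem:repperm} only applies to totally regular graphs, so we cannot use injectivity of $r$ directly. The key step, and the main obstacle, is therefore to prove directly that two distinct roots $u_1,u_2$ cannot share a repeat under these layer constraints. Suppose $r(u_1)=r(u_2)=x \in U(v)$, with $x$ in layer $k$ of the directed branch of each tree. I would try to count appearances of in-neighbours of $v$ and of $x$ in the inverse tree $T^{-k}_v$. Because $d^-(v)=0$, we have $|V(T^{-k}_v)| < M(r,1,k)$, and I expect the deficit to be exactly what is absorbed by the in-degree surplus on $S'_G$. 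Comparing this with the double occurrences forced by a shared repeat should produce a vertex counted three times, or a vertex at distance greater than $k$, which is a contradiction.

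If this direct approach fails, the fallback is a global double count. Summing over all roots $u$, the total number of ``excess'' appearances in the trees $T_u$ equals $|V(G)|$, one per root. On the other hand, each $v \in S_G$ forces its $r+1$ roots in $\{v\}\cup U(v)$ to have repeats inside $U(v)$, each in a deep layer. Each $v' \in S'_G$ with surplus $\sigma^+(v')$ absorbs only a bounded amount of this excess. Using $|S_G| = \sum_{v'\in S'_G}\sigma^+(v')$ together with $r \ge 2$, I expect the inequality to fail, which gives the contradiction.
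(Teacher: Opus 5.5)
\textbf{Assessment: there is a genuine gap.} Your proposal sets up the right pigeonhole but never derives the contradiction.

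\textbf{What is correct.} With $z=1$, every $v\in S_G$ has $d^-(v)=0$ directly, so you do not need Theorem~\ref{thm:defectboundz>1}. Applying Lemma~\ref{lem:nonlinevertex} to the $r+1$ roots in $\{v\}\cup U(v)$ does force two distinct roots $w_1,w_2$ to share a repeat $x\in N^-(v)=U(v)$. One small justification is missing: $v\notin\lines(v^*)$ holds because a second appearance of $v$ at a layer $t<k$ would also repeat its directed out-neighbour. The paper makes the same pigeonhole move with $F=\{(v^*)^*\}\cup U((v^*)^*)$.

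\textbf{Where it stops.} The argument ends exactly where the real work begins. A shared repeat is not contradictory by itself. When $G$ is not totally regular, a vertex of $S'_G$ can be the repeat of several roots, which is precisely why Lemma~\ref{lem:repperm} assumes total regularity. So the contradiction must come from the balance $\sum_{S_G}\sigma^-=\sum_{S'_G}\sigma^+$ of Lemma~\ref{lem:sigmasums}. Your proposal has no mechanism linking the shared repeat to the size of $S'_G$.

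\textbf{Why your two suggested routes do not supply it.}
\begin{enumerate}
\item The inverse-tree idea fails at its first step. Counting multiplicities gives $|V(T^{-k}_v)|=M(r,1,k)-1+|\{w: r(w)=v\}|$. So the claim $|V(T^{-k}_v)|<M(r,1,k)$ is equivalent to no root having repeat $v$, and $d^-(v)=0$ alone does not give this. Moreover, a shared repeat $x$ appears as a double repeat in $T^{-k}_x$, not in $T^{-k}_v$.
\item The fallback global count is vacuous. Out-regularity gives $\sum_x|V(T^{-k}_x)|=|V(G)|\,M(r,1,k)$ identically, which is the ``one excess per root'' count. No inequality can fail without local information.
\end{enumerate}

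\textbf{The missing idea.} It is Lemma~\ref{lem:arrowsinclusion}: every arrow vertex with respect to $w$ lies in $(N^+(r(w))\setminus S_G)\cup S'_G$. Since $r(w_1)=r(w_2)=x$, all arrow vertices of $w_1$ and $w_2$ outside $S'_G$ are squeezed into the $(r+1)$-set $N^+(x)$. The paper combines this with two further facts.
\begin{enumerate}
\item[(i)] $\lines(w_1)\cap\lines(w_2)=\emptyset$. By the second part of Lemma~\ref{lem:nonlinevertex}, the repeat of the central root $(v^*)^*$ lies in layer $k$ of its directed branch, so the positions housing those line vertices carry no repetition. Lemma~\ref{lem:nonlinevertex} then gives $S_G\subseteq N^+(x)$.
\item[(ii)] $|\arrows(w_1)\cup\arrows(w_2)|\ge r+2$, except when $r=2$, $k=3$ and $w_1,w_2\ne(v^*)^*$.
\end{enumerate}
Together these yield
\[
\sum_{S'_G}\sigma^+\ \ge\ |S'_G|\ \ge\ (r+2)-(r+1-|S_G|)\ >\ |S_G|,
\]
contradicting Lemma~\ref{lem:sigmasums}.

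\textbf{The residual case.} For $r=2$, $k=3$ the paper needs a separate count over the $27$ vertices:
\begin{itemize}
\item $v$ is a line vertex for at most seven roots, so some $y\in N^-(v)$ is the repeat of at least ten roots.
\item This gives at least $20$ arrow--root pairs.
\item At most $12$ of these pairs can involve $S'_G$, and at most $6$ can involve $N^+(y)\setminus S'_G$.
\end{itemize}
None of these ingredients appears in your proposal, so as written it is the first step of an outline rather than a proof.
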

\begin{proof}
By Lemma~\ref{lem:outreg}, $G$ is out-regular. Suppose for a contradiction that $G$ is not totally regular. Then, by Corollary~\ref{cor:snonempty}, we may choose $v \in S_G$. Since $r \geq 2$, we may choose $(v^*)^* \in V(G) \setminus \{ v \}$ such that $(v^*)^* \sim v^* \sim v$ for some $v^* \in U(v)$.

For $x \in V(G)$, let $\arrows(r^{-1}(x))$ be the set of vertices which are arrow vertices with respect to a vertex whose repeat is $x$. By Lemma~\ref{lem:arrowsinclusion}, $\arrows(r^{-1}(x)) \subseteq (N^+(x) \setminus S_G) \cup S'_G$. Therefore, 
\begin{align} \sum_{v' \in S'_G} \sigma^+(v') \geq |S'_G| &\geq |\arrows(r^{-1}(x)) \setminus (N^+(x) \setminus S_G)| \nonumber \\ 
&\geq |\arrows(r^{-1}(x))| - |N^+(x) \setminus S_G|. 
\label{eqn:excessbound}
\end{align}

Let $F = \{ (v^*)^* \} \cup U((v^*)^*)$. If $v \in \lines(w)$ for some $w \in F$, then both $v$ and $v^*$ are repeated in $T_w$ --- a contradiction. So by Lemma~\ref{lem:nonlinevertex}, $r(w) \in N^-(v)$ for all $w \in F$. Since $d^-(v) = 0$, $|N^-(v)| = r$. Therefore, as $|F| = r+1 > r$, there must be distinct $w_1, w_2 \in F$ and $u \in N^-(v)$ such that $r(w_1) = r(w_2) = u$.

Again by Lemma~\ref{lem:nonlinevertex}, as $d((v^*)^*, v) = 2 < k$, all vertices outside of layer $k$ in the directed branch of $T_{(v^*)^*}$ are distinct. Therefore, the vertices in layers $2, \dots, k-1$ in the directed branches of $T_{w_1}$ and $T_{w_2}$ are distinct.
Hence, $\lines(w_1) \cap \lines(w_2) = \emptyset$ and
by Lemma~\ref{lem:nonlinevertex}, $S_G \subseteq N^+(u)$. Then, by (\ref{eqn:excessbound}) and the out-regularity of $G$,
\begin{equation}
\sum_{v' \in S'_G} \sigma^+(v') \geq |\arrows(r^{-1}(u))| - (|N^+(u)| - |S_G|) = |\arrows(r^{-1}(u))| - r-1 + |S_G|.
\label{eqn:sig+bound}
\end{equation}
We now show that $|\arrows(r^{-1}(u))| \geq r+2$. Since $G$ has defect one, $|\arrows(w_1)| \geq r$. First, suppose $w_1, w_2 \neq (v^*)^*$. All vertices not in layer $k$ of the directed branch of $T_{(v^*)^*}$ are distinct. Thus, if $a \in \arrows(w_2)$ is such that $d(w_2, a) \geq k-2$ and $a$ doesn't appear in the $(v^*)^*$-branch of $T_{w_2}$, $a \notin \arrows(w_1)$. If $k>3$ or $r>2$, there are at least two such arrow vertices and $|\arrows(r^{-1}(u))| \geq r+2$.

If $w_1 = (v^*)^*$, the directed out-neighbour of $w_1$ is an arrow vertex with respect to $w_2$ but not $w_1$. Also, there is an arrow vertex with respect to $w_2$ in layer $k$ of $T_{(v^*)^*}$ which cannot be an arrow vertex with respect to $w_1$.
So if $r \geq 3$, $k \geq 4$, or $w_1 = (v^*)^*$, we conclude that $|\arrows(r^{-1}(u))| \geq r+2$. Then, Equation~\ref{eqn:sig+bound} gives \[\sum_{v' \in S'_G} \sigma^+(v') > |S_G| = \sum_{v \in S_G} \sigma^-(v), \] which contradicts Lemma~\ref{lem:sigmasums}.

This leaves the case of $r=2$, $k=3$, and $w_1, w_2 \neq (v^*)^*$. Now, $\left|S_G\right| \leq \left|N^+(u)\right| = 3$. As well as this, for all $x \in V(G)$, $v \in \lines(x)$ if and only if $x \in Z^-(U(v))$. We have $d(v) = 2$. If both (undirected) neighbours of $v$ lie in $S'_G$, Lemma~\ref{lem:sigmasums} gives
\begin{align} 
    \label{eqn:S'bound}
    \left|Z^-(U(v))\right| \leq \sum_{v' \in S'_G} d^-(v') &= \sum_{v' \in S'_G} \sigma^+(v') + \left|S'_G\right| \nonumber \\
&\leq 2 \sum_{v' \in S'_G} \sigma^+(v') = 2\left|S_G\right| \leq 6.
\end{align}
If only one neighbour of $v$ lies in $S'_G$, then $\left|Z^-(U(v))\right| \leq \sum_{v' \in S'_G} d^-(v') + 1 \leq 7$. If no neighbour of $v$ lies in $S'_G$, then $\left|Z^-(U(v))\right| \leq 2$. So $v$ is a line vertex with respect to at most seven vertices in $G$.
As $|V(G)| = M(2,1,3) - 1 = 27$ (from Equation~\ref{eqn:mixedmoorek=2}), by Lemma~\ref{lem:nonlinevertex} there are at least $20$ vertices in $G$ whose repeat lies in $N^-(v)$. But $|N^-(v)| = 2$, so may we choose $y \in N^-(v)$ such that at least $10$ vertices of $G$ have repeat $y$. 

Since $G$ has defect one, there are two arrow vertices with respect to any vertex. Letting $B = \{ (a, w) : r(w) = y \text{ and } a \in \arrows(w) \}$, we have $|B| \geq 2\times 10 = 20$. Again since $G$ has defect one, any $a \in V(G)$ is an arrow vertex with respect to exactly $2d^-(a)$ vertices. Thus, by (\ref{eqn:S'bound}), in at least $20 - \sum_{v' \in S'_G} 2d^-(v') \geq 8$ of the pairs $(a,w) \in B$, $a \notin S'_G$. But by Lemma~\ref{lem:arrowsinclusion}, all such $a$ lie in $N^+(y)$. Thus, there can be at most $2 \times 3 = 6$ such pairs --- a contradiction.
\end{proof}

As promised, we now answer Conjecture~\ref{conj:totallyregular} in the affirmative.

\begin{theorem}
All almost mixed Moore graphs are totally regular.
\label{thm:totallyregsummary}
\end{theorem}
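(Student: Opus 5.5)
The statement is a summary: an almost mixed Moore graph is an $(r,z,k;-1)$-graph with $r,z \geq 1$ and $k \geq 2$, and I will split according to the diameter $k$ and the directed degree $z$. Each case is then covered by a result already available.

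\emph{Diameter two.} If $k = 2$, Theorem~\ref{thm:diam2totallyregular} (from \cite{tuite-2019}) says directly that $G$ is totally regular. From now on assume $k \geq 3$.

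\emph{Directed degree at least two.} If $z \geq 2$, Corollary~\ref{thm:totallyregz>1} applies. Recall how it follows from Theorem~\ref{thm:defectboundz>1}. Suppose $G$ is not totally regular. Then Corollary~\ref{cor:snonempty}, which applies since $\delta = 1 < r+z$, gives $\sigmax(G) \geq 1$. Theorem~\ref{thm:defectboundz>1} says either $\delta = z = \sigmax(G)$ or $\delta > \sigmax(G)$. The first option would force $z = 1$, and the second would force $\delta > 1$. Both contradict $z \geq 2$ and $\delta = 1$.

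\emph{Directed degree one.} Now $z = 1$. If $r \geq 2$, Theorem~\ref{thm:totallyregz=1} gives total regularity. If $r = 1$, Theorem~\ref{thm:(1,1,3;-1)totallyregular} covers all $k \geq 3$.

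The cases $k = 2$; $k \geq 3$ with $z \geq 2$; $k \geq 3$ with $z = 1$, $r \geq 2$; and $k \geq 3$ with $z = r = 1$ exhaust all almost mixed Moore graphs, which proves the theorem. The real obstacle has already been handled in the case $z = 1$, $r \geq 2$ (Theorem~\ref{thm:totallyregz=1}). Here the only thing to check is that the case split is exhaustive and that each cited result's hypotheses ($r,z \geq 1$ and the appropriate range of $k$) are met.
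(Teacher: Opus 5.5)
Your case split ($k=2$; $k\ge 3$ with $z\ge 2$; $k\ge 3$ with $z=1$, $r\ge 2$; $k\ge 3$ with $r=z=1$) matches the paper's proof, and you cite the same results. Your re-derivation of Corollary~\ref{thm:totallyregz>1} from Theorem~\ref{thm:defectboundz>1} and Corollary~\ref{cor:snonempty} is correct.

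The one missing step is the justification that $k\ge 2$. You build it into the definition, but in the paper an almost mixed Moore graph is just a defect-one mixed graph with $r,z\ge 1$; nothing restricts the diameter. None of the results you cite covers $k=1$, so your claim that the four cases are exhaustive is not yet justified. The paper closes this with a short argument, which you should add:
\begin{itemize}
\item Since $z\ge 1$ is the maximum directed out-degree, some vertex $v$ has an arc $v\rightarrow w$.
\item At most one edge or arc joins any pair of vertices, so $w\nrightarrow v$ and $w\not\sim v$.
\item Hence $d(w,v)\ge 2$, and therefore $k\ge 2$.
\end{itemize}
This also excludes diameter zero. With it added, your proof is complete and matches the paper's.
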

\begin{proof}
Corollary~\ref{thm:totallyregz>1} and Theorem~\ref{thm:totallyregz=1} show that all $(r,z,k;-1)$-graphs with $k \geq 3$, $r,z \geq 1$, and $r + z > 2$ are totally regular. The result for $r=z=1$, $k \geq 3$ is Theorem~\ref{thm:(1,1,3;-1)totallyregular} and the result for $k=2$ is Theorem~\ref{thm:diam2totallyregular}. 

By definition, if $G$ is an almost mixed Moore graph, there exists $v \in V(G)$ with $d^+(v) \geq 1$. Since we only permit a single arc or edge between any pair of vertices, $d(w, v) > 1$ for all $w \in Z^+(v)$. Hence, there are no almost mixed Moore graphs of diameter one.
\end{proof}

\section{The existence of almost mixed Moore Graphs}
\label{sec:nonexistence}
We now investigate the existence of almost mixed Moore graphs. Recall from Theorem~\ref{thm:undirectedbound} that there are no totally regular $(r, z, k; -1)$-graphs whenever $k \geq 3$, $r > 1$, and $z \geq 1$.
Theorem~\ref{thm:totallyregsummary} therefore has following corollary, ruling out the existence of a large class of possible almost mixed Moore graphs.

\begin{corollary}
    \label{cor:diam>=3r=1}
    Any almost mixed Moore graph with diameter at least three is totally regular with undirected degree one.
\end{corollary}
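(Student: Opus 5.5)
The argument combines Theorem~\ref{thm:totallyregsummary} with Theorem~\ref{thm:undirectedbound}. Let $G$ be an almost mixed Moore graph of diameter $k \geq 3$. By definition $G$ is an $(r,z,k;-1)$-graph with $r, z \geq 1$. Theorem~\ref{thm:totallyregsummary} tells us that $G$ is totally regular. This is the substantive input, and it is already established.

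Next we apply Theorem~\ref{thm:undirectedbound} to $G$: any totally regular $(r,z,k;-\delta)$-graph with $k \geq 3$ has defect $\delta \geq r$. Since $\delta = 1$, this gives $r \leq 1$. We also have $r \geq 1$ by the definition of an almost mixed Moore graph, so $r = 1$.

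We still need to check that this $r$ really is the undirected degree of every vertex, and not merely the maximum undirected degree in the $(r,z,k;-\delta)$ notation. Lemma~\ref{lem:outreg} settles this: it applies because $\delta = 1 < r + z$, and it shows $G$ is out-regular with $d(v) = r$ for every $v$. Hence $G$ is totally regular with undirected degree one.

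There is no real obstacle. The only point needing care is that the parameter $r$ in Theorem~\ref{thm:undirectedbound} is the undirected degree bound from the problem, and that total regularity together with Lemma~\ref{lem:outreg} makes it coincide with the actual common undirected degree.
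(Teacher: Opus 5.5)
Your proposal is correct and matches the paper's argument. Theorem~\ref{thm:totallyregsummary} gives total regularity, and Theorem~\ref{thm:undirectedbound} with $\delta = 1$ then forces $r = 1$; your extra appeal to Lemma~\ref{lem:outreg}, to identify the parameter $r$ with the actual common undirected degree, is a small piece of care that the paper leaves implicit.
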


Therefore, if there exists an $(r,z,k;-1)$-graph (with $r,z \geq 1$), we must either have $k \geq 3$ and $r=1$ or $k=2$. We first prove that there are no $(r,z,k;-1)$-graphs with $k \geq 4$ and $r=1$.

\begin{theorem}
    \label{thm:almostmixedmoorenonexistencek>=4}
    There is no $(r, z, k; -1)$-graph for $r, z \geq 1$, $k \geq 4$.
\end{theorem}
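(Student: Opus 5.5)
By Corollary~\ref{cor:diam>=3r=1}, any $(r,z,k;-1)$-graph $G$ with $k \geq 4$ is totally regular with $r=1$, so it suffices to rule out totally regular $(1,z,k;-1)$-graphs with $k \geq 4$. I would split into the cases $z=1$ and $z \geq 2$.

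\emph{Case $z=1$.} Here I would apply Theorem~\ref{thm:r=z=1defectbound} directly. Every summand in the bound is a nonnegative integer, since the $Z'_t$ are Fibonacci numbers. For $k \geq 4$ the terms $t=1$ and $t=2$ are both present. We have $Z'_1 = Z'_2 = 1$. For $t=1$ the factor is $\lceil \tfrac13 + \tfrac13\lfloor \tfrac{k-1}{2}\rfloor\rceil \geq 1$. For $t=2$ the factor is $\lceil \tfrac13 + \tfrac13\lfloor \tfrac{k-2}{2}\rfloor\rceil \geq 1$. Hence $\delta \geq 2$, contradicting $\delta = 1$.

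\emph{Case $z \geq 2$.} Fix $u \in V(G)$ and let $u^*$ be its unique undirected neighbour. Consider $T_{u^*}$, whose only undirected branch is the $u$-branch. Every arrow vertex $a \in \arrows(u^*)$ satisfies $d^-(a) \geq z+1-|N^-(a)\cap R(u^*)|$ by Lemma~\ref{lem:undirectedindeg}. Since $G$ is totally regular, $d^-(a)=z$, which forces $r(u^*) \in N^-(a)$. (Equivalently, Lemma~\ref{lem:arrowsinclusion} with $S_G = S'_G = \emptyset$ gives $\arrows(u^*) \subseteq N^+(r(u^*))$.) By out-regularity, $|N^+(r(u^*))| = z+1$.

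It then remains to show that $\arrows(u^*)$ contains more than $z+1$ distinct vertices. Because $k \geq 4$, layers $2$ and $3$ of the $u$-branch both lie strictly below layer $k$, so all their directed children count as arrow vertices.
\begin{itemize}
\item Layer $2$ contains the $z$ vertices of $Z^+(u)$, each reached from $u$ by an arc, so each is an arrow vertex.
\item Each vertex of $Z^+(u)$ has $z$ directed out-neighbours in layer $3$, giving $z^2$ arrow-vertex tuples in layer $3$.
\end{itemize}
This yields $z + z^2$ tuples that are arrow vertices. Since $G$ has defect one, at most one vertex of $G$ is repeated in $T_{u^*}$, so these tuples represent at least $z+z^2-1$ distinct vertices. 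For $z \geq 2$ we have $z + z^2 - 1 \geq z+3 > z+1$, so not all of them fit in $N^+(r(u^*))$, a contradiction.

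The step needing the most care is the bookkeeping of arrow vertices. One must check that layer $3$ tuples obtained via arcs from layer $2$ genuinely satisfy the definition: they lie in layer $t<k$ of an undirected branch and have a directed in-neighbour in layer $t-1$ of that branch. This is exactly where $k \geq 4$ is used. One must also check that the single repeat costs at most one distinct vertex. The case $z=1$ is handled separately because the count $z+z^2-1=1$ gives no contradiction there, which is why the defect bound of Theorem~\ref{thm:r=z=1defectbound} is invoked instead.
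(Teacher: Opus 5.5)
Your proof is correct and has the same structure as the paper's. You reduce to totally regular graphs with $r=1$, dispose of $z=1$ via Theorem~\ref{thm:r=z=1defectbound}, and for $z\ge 2$ use Lemma~\ref{lem:arrowsinclusion} to put every arrow vertex in layers $2$ and $3$ inside the out-neighbourhood of the repeat. The only difference is the final contradiction. The paper notes that each layer-$3$ arrow vertex hanging off a layer-$2$ arrow vertex is reached from $r(u)$ by paths of lengths $1$ and $2$, giving at least two repeats in $T_{r(u)}$. You instead count at least $z^2+z-1$ distinct arrow vertices against $|N^+(r(u^*))|=z+1$, which is equally valid and slightly more transparent.
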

\begin{proof}
Suppose for a contradiction that there exists an $(r, z, k; -1)$-graph $G$ for $r, z \geq 1$ and $k \geq 4$. By Corollary~\ref{cor:diam>=3r=1}, $G$ is totally regular and $r=1$. By Theorem~\ref{thm:r=z=1defectbound}, $z \geq 2$. Since $k \geq 4$, there are arrow vertices in layers $2$ and $3$ of $T_u$. By Lemma~\ref{lem:arrowsinclusion}, all have in-neighbour $r(u)$. We see that there are at least two paths of length at most $k$ from $r(u)$ to each of the arrow vertices in layer $3$ of $T_u$ which are a directed out-neighbour of an arrow vertex in layer $2$. However, since $z>1$ and $G$ has defect one, there are at least two such arrow vertices. Therefore, there are at least two vertices repeated in $T_{r(u)}$ --- a contradiction.
\end{proof}

We now turn to the question of the existence of $(1,z,3;-1)$-graphs. We begin by recalling a result from \cite[Theorem 2.2(c)]{dalfo-2024}. This originally only applied to totally regular almost mixed Moore graphs of diameter three, but the regularity condition is now unnecessary.

\begin{lemma}[See {\cite[Theorem 2.2(c)]{dalfo-2024}}]
    Let $G$ be a $(1, z, 3; -1)$-graph with $z \geq 1$. If $r(u) = u^*$ for all $u \in V(G)$, then $z=1$.
    \label{lem:diam3r=1}
\end{lemma}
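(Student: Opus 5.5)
The plan is to reduce the statement to the totally regular case, which is where \cite[Theorem 2.2(c)]{dalfo-2024} was originally proved. Let $G$ be a $(1,z,3;-1)$-graph with $z \geq 1$ and $r(u) = u^*$ for all $u \in V(G)$. Then $G$ is an almost mixed Moore graph of diameter three. By Theorem~\ref{thm:totallyregsummary}, $G$ is totally regular, so $d^-(v) = d^+(v) = z$ and $d(v) = 1$ for every $v$. The hypothesis of the original result is therefore met, and it gives $z = 1$. This is the whole proof; the only new ingredient is Theorem~\ref{thm:totallyregsummary}, which removes the extra assumption.

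If I wanted the argument to be self-contained rather than citing \cite{dalfo-2024}, I would do a counting argument in the totally regular setting. The steps would be as follows.
\begin{enumerate}
\item Since $G$ is totally regular, Lemma~\ref{lem:repperm} says the repeat function is a bijection. The hypothesis $r(u) = u^*$ makes it the involution $u \mapsto u^*$, so the undirected edges form a perfect matching.
\item For each $u$, $u^*$ appears once in layer $1$ of $T_u$. Its second appearance must therefore lie in the directed branches, so there is a directed-branch walk of length $2$ or $3$ from $u$ to $u^*$.
\item Since $|V(T_u)| = M(1,z,3)$ and only $u^*$ is repeated, every other vertex appears exactly once in $T_u$.
\item Lemma~\ref{lem:arrowsinclusion}, with $S_G = S'_G = \emptyset$, gives $\arrows(u) \subseteq N^+(u^*)$. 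The arrow vertices in layer $2$ of the $u^*$-branch are exactly $Z^+(u^*)$, which fits. I would then look at the line vertices in layer $2$ of each of the $z$ directed branches, namely the vertices $w^*$ for $w \in Z^+(u)$.
\item Applying step 2 to each $w \in Z^+(u)$ puts $w^*$ at directed distance at most $3$ from $w$. Comparing with the unique appearances in $T_u$ should force $w^*$ into a prescribed position.
\item Summing the resulting constraints over all $w \in Z^+(u)$ should give a contradiction unless $z = 1$: roughly, several different directed branches would each have to contain a second copy of some vertex.
\end{enumerate}

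I expect step 6, the final counting contradiction for $z \geq 2$, to be the main obstacle if the argument is done from scratch. I have not checked that the constraints from steps 4 and 5 actually suffice. For the purposes of this paper I would rely on the citation: once Theorem~\ref{thm:totallyregsummary} supplies total regularity, the lemma follows immediately from \cite[Theorem 2.2(c)]{dalfo-2024}.
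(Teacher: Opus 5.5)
Your proof is correct and matches the paper's, which justifies the lemma the same way: it cites \cite[Theorem 2.2(c)]{dalfo-2024} and notes that Theorem~\ref{thm:totallyregsummary} supplies the total regularity the original result assumed. That theorem does not depend on this lemma, so there is no circularity. Your optional self-contained sketch is not needed and, as you say, is unfinished, so leave it out of the proof.
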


\begin{lemma}
    If $G$ is a $(1, z, 3; -1)$-graph with $z \geq 2$ and $u \in V(G)$, then $r(u) = u^*$.
    \label{lem:diam3repstruct}
\end{lemma}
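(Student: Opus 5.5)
The plan is to use total regularity (Theorem~\ref{thm:totallyregsummary}) together with the structure of the arrow vertices, and to show that if $r(u) \neq u^*$ then $T_u$ contains two repeated vertices. Let $G$ be a $(1,z,3;-1)$-graph with $z \geq 2$ and fix $u \in V(G)$. By Theorem~\ref{thm:totallyregsummary} and Lemma~\ref{lem:outreg}, $G$ is totally regular with undirected degree one, so $S_G = S'_G = \emptyset$.

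\textbf{Step 1: the arrow vertices.} Layer $1$ of $T_u$ consists of $u^*$ and the $z$ vertices of $Z^+(u)$. Since $k=3$, arrow vertices occur only in layer $2$ of the unique undirected branch. Hence $\arrows(u) = Z^+(u^*)$, a set of $z$ distinct vertices. Lemma~\ref{lem:arrowsinclusion} then gives $Z^+(u^*) \subseteq N^+(x)$, where $x = r(u)$. So if $x \neq u^*$, every $a \in Z^+(u^*)$ has both $u^*$ and $x$ as in-neighbours. Since $z \geq 2$, we can pick two distinct such vertices $a_1, a_2$.

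\textbf{Step 2: locating $x$ in $T_u$.} I would redo the count in the proof of Lemma~\ref{lem:undirectedindeg} for an arrow vertex $a$ with $1 < \ell = 2 < k$. Each of the $z$ directed branches must contain an in-neighbour of $a$ in layer $1$ or $2$, because every vertex of $Z^+(u)$ reaches $a$ within distance $3$. The undirected branch contains the two in-neighbours $u$ and $u^*$. This gives $z+2$ appearances, but $|N^-(a)| = z+1$ by total regularity. Therefore the single repetition in $R(u) = \{x\}$ is an appearance of $x$ as an in-neighbour of $a$, in layer $1$ or $2$ of a directed branch (or as $u$ itself in layer $0$, which is the degenerate case $x=u$ and is handled the same way).

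\textbf{Step 3: the double repeat.} Take the occurrence of $x$ found in Step 2, at layer at most $2$. In the tree, its children are all of $N^+(x)$ except possibly its tree-parent, and they lie in layer at most $3$. So $a_1$ and $a_2$ both appear below this occurrence of $x$, unless one of them, say $a_1$, is the parent of $x$. Each $a_i$ also appears in layer $2$ of the $u^*$-branch, so if both appear below $x$ then $a_1$ and $a_2$ are both repeated, contradicting that $G$ has defect one. If instead $a_1$ is the tree-parent of $x$, then $a_1$ lies in layer $\le 1$ of a directed branch, while it also lies in layer $2$ of the $u^*$-branch, so $a_1$ is repeated; and $a_2$ is still a child of $x$, so $a_2$ is repeated too. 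Either way we get two repeats, a contradiction. Hence $r(u) = u^*$.

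The main obstacle is Step 2: making rigorous that the unique surplus appearance of an in-neighbour of $a$ is exactly the repeat $x$, and that it sits at layer at most $2$ with $a$ among its tree-children. The subtle sub-cases are where $x$ coincides with $u$, with a vertex of $Z^+(u)$, or with an undirected neighbour of some $a_i$. I would handle these by directly listing the tuples in $T_u$ in which $x$ can occur.
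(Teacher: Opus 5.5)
Your Step 1 is fine. Step 3 is also fine \emph{provided} some occurrence of $x=r(u)$ lies in layer at most $2$. The gap is Step 2, which is meant to guarantee this and does not.

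From $d(w,a)\le 3$ for $w\in Z^+(u)$, a layer-$1$ vertex, you only get an in-neighbour of $a$ at distance at most $2$ from $w$. That puts it in layer at most $3$ of the $w$-branch, not layer $1$ or $2$. (If it were in layer at most $2$, then $a$ itself would already occur in that branch.) Layer $3$ really does occur. Take the $(1,1,3;-1)$-graph with arcs $0\to1\to2\to3\to4\to0$ and $5\to9\to8\to7\to6\to5$, and edges $i\sim i+5$. With $u=0$ and $a=9$, the directed branch of $T_0$ is $1;\,6,2;\,5,7,3$, and its only in-neighbour of $9$ is $5$, in layer $3$. Your reasoning does not use $z$, so it fails in the same way for $z\ge 2$.

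A smaller point: the second in-neighbour of $a$ in the undirected branch is $a^*$, a child of $a$ in layer $3$. It is not $u$, which is never an in-neighbour of $a$. The count of $z+2$ appearances is correct, but it only re-derives $x\in N^-(a)$, which Lemma~\ref{lem:arrowsinclusion} already gives. It says nothing about the layer of $x$. So the case in which both occurrences of $x$ lie in layer $3$ is untreated.

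Part of that case yields to a sharper count. Suppose $x=a_1^*$ for some $a_1\in Z^+(u^*)$. Then $x\to a_2$ for any other $a_2\in Z^+(u^*)$. The undirected branch alone then contains three appearances $u^*$, $a_2^*$, $x$ of in-neighbours of $a_2$, giving at least $z+3>z+2$ appearances in total. The remaining configuration is different: $Z^+(x)=Z^+(u^*)$, with $x$ occurring in layer $3$ of two different directed branches. There the in-neighbour count of every $a\in Z^+(u^*)$ is exactly tight, and nothing in your argument excludes it. Listing the positions of $x$ in $T_u$, as you propose, will not produce a contradiction on its own.

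The paper gets around this by leaving $T_u$ and working with $T_v$ for $v\in Z^-(u)$.
\begin{itemize}
\item If for every such $v$ the $v^*$-branch of $T_v$ meets $N^-(u^*)$, then $r(v^*)=u^*$ for $z\ge 2$ distinct vertices $v^*$. This contradicts the bijectivity of the repeat function (Lemma~\ref{lem:repperm}).
\item Otherwise, for such a $v$, the $z+1$ in-neighbours of $u^*$ all lie in the $z$ directed branches of $T_v$. So some $w\in Z^+(v)$ has $r(w)=u^*$. Lemma~\ref{lem:arrowsinclusion} applied to $w$ would then repeat all of $\arrows(w)$ in $T_v$ unless $w=u$.
\end{itemize}
Some global input of this kind, Lemma~\ref{lem:repperm} in particular, is what your proposal is missing.
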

\begin{proof}
    Let $G$ be a $(1, z, 3; -1)$-graph with $z \geq 2$ and let $u \in V(G)$. By Theorem~\ref{thm:totallyregsummary}, $|N^-(u)| = |N^-(u^*)| = z+1$. Suppose for a contradiction that for each $v \in Z^-(u)$, a vertex in $N^-(u^*)$ appears in the $v^*$-branch of $T_v$. Then, for all $v \in Z^-(u)$, there are two $v^*, u^*$-paths of length at most three and $r(v^*) = u^*$. Therefore, at least two vertices in $G$ have repeat $u^*$, contradicting Lemma~\ref{lem:repperm}.

    So choose $v \in Z^-(u)$ such that no in-neighbour of $u^*$ appears in the $v^*$-branch of $T_v$. As $G$ has defect one, $v \nrightarrow u^*$. Since $|N^-(u^*)| = z+1$ and there are $z$ (directed) branches of $T_v$ other than the $v^*$-branch, there exists $w \in Z^+(v)$ with two in-neighbours of $u^*$ appearing in the $w$-branch of $T_v$. It follows that $r(w)=u^*$. Hence, by Lemma~\ref{lem:arrowsinclusion}, $u^*$ is a common in-neighbour of all vertices in $\arrows(w)$. If $w \neq u$, these arrow vertices are all repeated in $T_v$. But there are $z \geq 2$ such arrow vertices and so we conclude that $w=u$.
\end{proof}

\begin{theorem}
    \label{thm:almostmixedmoorenonexistencek=3}
    For $r,z \geq 1$, if $r > 1$ or $z > 1$, there is no $(r,z,3;-1)$-graph.
\end{theorem}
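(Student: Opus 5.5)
Suppose for a contradiction that $G$ is an $(r,z,3;-1)$-graph with $r,z \geq 1$ and $r+z>2$. We first reduce to a single case. By Corollary~\ref{cor:diam>=3r=1}, $G$ is totally regular with undirected degree one, so $r=1$ and hence $z \geq 2$. It therefore suffices to rule out $(1,z,3;-1)$-graphs with $z \geq 2$.

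For such a $G$, Lemma~\ref{lem:diam3repstruct} gives $r(u)=u^*$ for every $u \in V(G)$. Lemma~\ref{lem:diam3r=1} then forces $z=1$, contradicting $z \geq 2$. This completes the argument.

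The substantive work has already been done in the two preceding lemmas and in Theorem~\ref{thm:totallyregsummary}, so this proof is just their assembly. The one point to check is that Lemma~\ref{lem:diam3r=1} may be applied without assuming total regularity. That is harmless: total regularity of $G$ is already supplied by Theorem~\ref{thm:totallyregsummary}, so the original hypothesis of \cite[Theorem 2.2(c)]{dalfo-2024} holds anyway.
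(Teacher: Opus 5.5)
Your proof is correct and is essentially the paper's argument: Corollary~\ref{cor:diam>=3r=1} reduces to $r=1$, so $z\geq 2$. Then Lemma~\ref{lem:diam3repstruct} gives $r(u)=u^*$ for all $u$, and Lemma~\ref{lem:diam3r=1} forces $z=1$, a contradiction. Your remark that total regularity is supplied by Theorem~\ref{thm:totallyregsummary} matches the paper's own note that the regularity hypothesis of the cited result is no longer needed.
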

\begin{proof}
    First, by Corollary~\ref{cor:diam>=3r=1}, there is no $(r, z, 3; -1)$-graph whenever $r>1$. Suppose $z \geq 1$ and there exists a $(1,z,3;-1)$-graph, say $G$. By Lemma~\ref{lem:diam3repstruct}, $z=1$ or $z \geq 2$ and $r(u) = u^*$ for all $u \in V(G)$. But in the second case we contradict Lemma~\ref{lem:diam3r=1}. Hence, $z=1$.
\end{proof}

As earlier stated, there are three $(1,1,3;-1)$-graphs \cite{dalfo-2018}. The above result shows that these are the only almost mixed Moore graphs with diameter at least three.

\section{Conclusion}
We now conclude by stating some open problems. In Theorem~\ref{thm:defectboundz>1}, it was proven that the defect $\delta$ of an $(r,z,k;-\delta)$-graph $G$ is bounded below by $\sigmax(G) + 1$ unless $\delta = z = \sigmax(G)$. The bound would be more elegant and widely applicable if we could rule out the possibility of $\delta = z = \sigmax(G)$. The improved bound would imply the total regularity of $(r,1,k;-1)$-graphs, removing the need for Subsection~\ref{subsec:z=1totalregularity}. This motivates the following question.

\begin{problem}
    For which values of $r,z,k$ does there exist an $(r,z,k;-\delta)$-graph $G$ such that $\delta = z = \sigmax(G)$?
\end{problem}

We used our new bound to deduce that all $(r,z,k;-1)$-graphs with $z>1$ are totally regular. A counting argument was then used to prove the total regularity of $(r,1,k;-1)$-graphs with $r \geq 2$. Along with results from the existing literature, this completed the proof that all almost mixed Moore graphs are totally regular. We then used this result to prove that the $(1,1,3;-1)$-graphs of Figure~\ref{fig:almostmixedmooregraphs} are the only almost mixed Moore graphs with diameter at least three. This leaves the following open problem.

\begin{problem}[{See \cite[Quesitons 2, 3]{{lopez-2016}}}]
    Are there any almost mixed Moore graphs of diameter two other than the known $(2,1,2;-1)$-graph?
\end{problem}

Theorem~\ref{thm:r=z=1defectbound} gives a bound on the defect which increases with the diameter $k$. This result only applies to totally regular $(1,1,k;-\delta)$-graphs with $k \geq 3$. Intuitively, we would expect it to be possible to derive a similar bound under more general conditions. As suggested by James Tuite \cite{tuite-2026}, this motivates the following open problem.

\begin{problem}
    Is there a lower bound on the defect of a mixed graph which increases with the diameter?
\end{problem}

This work has focused on almost mixed Moore graphs. Since we have now settled the question of their existence for $k \geq 3$, an investigation into defect two mixed graphs would be a useful next step. We conclude by giving some questions which could be considered.

\begin{problem}
    Are all $(r,z,k;-2)$-graphs totally regular for $r, z \geq 1$, $k \geq 2$?
\end{problem}

\begin{problem}
    For which values of $r$, $z$, and $k$ does there exist an $(r,z,k;-2)$-graph?
\end{problem}

In this paper we have only allowed a single arc or edge between vertices, whereas in \cite{lopez-2016} there is no such constraint. Since we have focused on almost mixed Moore graphs of diameter at least three, this makes no difference to our earlier results. However, we cannot assume that this will be the case when studying mixed graphs of defect two.
Similarly to as discussed in \cite[Section 2]{lopez-2016}, since $M(r+2,0,2) = M(r,2,2) - 2$, examples of $(r,2,2;-2)$-graphs may be obtained by replacing certain edges by digons in known Moore graphs of diameter two. Since Moore graphs have been widely studied in their own right, we exclude examples arising in this way and ask the following problem.

\begin{problem}
    Does allowing digons affect the solutions to the previous two problems?
\end{problem}

\section*{Acknowledgements}
This work was completed as part of the author's dissertation for the Open University's MSc Mathematics. The author would like to thank his supervisor, Dr James Tuite, for his helpful feedback and advice. The author would also like to thank the M840 module team for their useful resources on producing a dissertation.

\end{document}